\documentclass[11pt,a4paper]{article}
\usepackage{authblk}
\usepackage[utf8]{inputenc}
\usepackage[T1]{fontenc}
\usepackage{graphicx}
\usepackage{amssymb}
\usepackage{amsmath, amsthm}
\usepackage{amsfonts}
\usepackage[numbers,sort&compress]{natbib}
\usepackage{hyperref}
\usepackage{caption}
\usepackage{xcolor}
\newtheorem{theorem}{Theorem}

\numberwithin{deff}{section}

\title{Pantograph Difference Equations: Stability, Multistability, and Chaos}
\author[1]{Prashant M. Gade}
\author[2]{Sachin Bhalekar\footnote{Corresponding author: sachinbhalekar@uohyd.ac.in}}
\affil[1]{Ramniranjan Jhunjhunwala College of Arts, Science and Commerce, Ghatkopar West, Mumbai 400086, India}
\affil[2]{School of Mathematics and Statistics, University of Hyderabad, Hyderabad 500046, India}

\date{}
\begin{document}
	
	\maketitle

		\begin{abstract}
Pantograph equations are functional differential equations with proportional delay, typically involving terms of the form $x(at)$, where $0<a<1$. These equations have been studied for several decades and have applications in areas ranging from number theory to electrodynamics. In this work, we investigate pantograph difference equations as discrete dynamical maps of the form
\[
x_{i+1}=F(x_{i})+G(x_{h(i)}), \qquad i\geq 0,
\]
where $h(i)\leq i$. In particular, we consider $h(i)=\lfloor ai \rfloor$ and $h(i)=\lfloor ai^{\alpha}\rfloor$, with $0<a<1$ and $0<\alpha<1$. We distinguish between homogeneous pantograph difference equations, for which $F(x)=0$, and inhomogeneous pantograph difference equations, for which $F(x)\ne 0$. We first analyze linear pantograph difference equations and establish sufficient conditions for asymptotic stability and for nonconvergence to zero. For proportional delays, analytical stability and instability regions are combined with multi-horizon parameter scans to construct regime diagrams in the $(\lambda,\delta)$-plane. 
We then investigate pantograph versions of the logistic and tent maps. In the homogeneous case, the dynamics evolves through step-like plateaus whose lengths increase at least exponentially, leading to slow convergence and finite-time multistability within computationally accessible time scales. In the inhomogeneous case, numerical experiments show finite-time exponential separation of nearby trajectories, consistent with chaotic dynamics.
	\end{abstract}
	
	 
\section{Introduction}

Pantograph equations are functional differential equations with proportional delay, originating in models of current collection systems and subsequently developed as a class of functional differential equations with rescaled arguments \cite{ockendon1971dynamics,kato1971functional,iserles1993generalized}. A typical example has the form
\[
\dot x(t)=f(t,x(at)), \qquad 0<a<1,
\]
where the rate of change at $t$ depends on the state at the scaled argument $at$. This feature distinguishes pantograph equations from ordinary delay differential equations with constant delays and gives rise to a memory structure that changes with time \cite{bellen2003numerical}.

The original motivation came from electric locomotive pantographs, where reliable contact with overhead wires is essential for current collection \cite{ockendon1971dynamics}. Since then, pantograph-type equations have appeared in several areas, including electrodynamics, number theory, quantum mechanics, statistical mechanics, viscoelasticity, thermoelasticity, diffraction theory, and population dynamics. The proportional delay produces mathematical difficulties that are different from those in constant-delay systems, and it often leads to solutions with nonstandard asymptotic and stability properties \cite{kato1971functional,iserles1993generalized,buhmann1993stability}.

A substantial part of the literature has focused on analytical representations of solutions and their qualitative properties. Series-based methods, decomposition techniques, and related semi-analytical approaches have been developed for linear, nonlinear, commensurate, and incommensurate proportional delays. In particular, Bhalekar and Patade studied analytical solutions of nonlinear equations with proportional delays \cite{bhalekar2016analytical}, series solutions of the pantograph equation and their properties \cite{bhalekar2017series}, and pantograph equations with incommensurate delay \cite{patade2017analytical}. More recently, Bhalekar also investigated stability questions for pantograph delay differential equations \cite{bhalekar2026stability}. These works emphasize that proportional delay can change not only the form of solutions but also their convergence, stability, and long-time behavior.

Numerical analysis of pantograph equations has also been extensively studied. Discretization schemes, stability-preserving numerical methods, perturbation--iteration algorithms, Laplace-transform approaches, predictor--corrector methods, frame-based methods, and Runge--Kutta-type approaches have been developed for pantograph and pantograph-type delay differential equations \cite{buhmann1993stability,bellen2003numerical,bahsi2015numerical,jafari2021new,rihan2022theta,alrebdi2023accurate,jamaludin2025adaptive,mohammad2026carleson,rihan2024runge}. Recent studies further continue to develop analytical, series-based, numerical, fractional-order, controllability, and optimal-control treatments for homogeneous, inhomogeneous, and generalized pantograph systems \cite{albidah2023exact,aljohani2024analytical,ahmad2023controllability,yi2025novel,abusalim2024some}. The original engineering motivation also remains active, as recent work continues to analyze pantograph--overhead line interaction in railway systems \cite{bruni2025recent}.

There is also an established literature on the stability and asymptotics of discrete pantograph equations. Buhmann and Iserles \cite{buhmann1993stability} studied trapezoidal-rule discretizations of generalized pantograph differential equations, including neutral terms and interpolated delayed values. They obtained sufficient conditions for boundedness and algebraic decay, and also considered a broader class of linear recurrence relations with proportional delays. Their work establishes that slow algebraic relaxation and stability analysis of discrete proportional-delay systems predate the present investigation. The interpolated and neutral discretizations they analyze should nevertheless be distinguished from the single-delayed-value maps considered below.

P\'eics \cite{peics2000asymptotic} investigated the variable-coefficient equation $x_{n+1}=(1-a_n)x_n+b_nx_{p_n}$, with $0<a_n<1$, $p_n\leq n$, and $p_n\to\infty$. Weighted comparison estimates yield upper bounds and, under additional positivity assumptions, lower bounds for solutions. In particular, Corollaries 1 and 3 give algebraic and logarithmic decay bounds for $p_n=\lfloor n/p\rfloor$ and $p_n=\lfloor n^{1/p}\rfloor$, respectively, where $p>1$ is an integer, under suitable strict coefficient inequalities. For constant coefficients, the identification $1-a_n=\lambda$ and $b_n=\delta$ includes our scalar linear equation when $0<\lambda<1$. Thus, both proportional and sublinear delays, and the possibility of very slow convergence, already occur in the theory of linear pantograph difference equations.

An especially direct connection is provided by Kundr\'at \cite{kundrat2006asymptotics}, who studied $\Delta x_n=Ax_n+Bx_{\lfloor qn\rfloor}$, with $0<q<1$, and obtained polynomial upper bounds for systems with diagonal instantaneous coefficient matrices, with an extension to diagonalizable matrices. In the scalar case, setting $A=\lambda-1$, $B=\delta$, and $q=a$ gives exactly the proportional-delay version of our linear recurrence. For $|\lambda|<1$ and $\delta\ne0$, his Theorem 3.1 yields
\[
x_n=O(n^\gamma),\qquad
\gamma=\frac{\log\!\left(|\delta|/(1-|\lambda|)\right)}{\log(1/a)}.
\]
Consequently, $|\lambda|+|\delta|<1$ implies decay in this setting, so the proportional-delay case of Theorem~\ref{thm1} below recovers a known sufficient stability condition. The general-delay proof given here does not require a proportional or monotone delay. A positive value of $\gamma$ in Kundr\'at's upper bound does not, by itself, imply unboundedness. His discussion also illustrates misleading transient decay before later growth in the continuous pantograph equation, reinforcing the need for caution when interpreting finite-time simulations.

Against this background, we study delayed maps of the form
\[
x_{i+1}=F(x_{i})+G(x_{h(i)}), \qquad i\geq 0,
\]
where $h(i)$ is an integer-valued delay function. We focus mainly on proportional and sublinear proportional delays of the form $h(i)=\lfloor ai \rfloor$ and $h(i)=\lfloor ai^{\alpha}\rfloor$, where $0<a<1$ and $0<\alpha<1$. When $F=0$, we refer to the system as a homogeneous pantograph difference equation; when $F\ne 0$, we call it an inhomogeneous pantograph difference equation.

{Building on this established linear theory, our objective is to understand how proportional delay changes the behavior of discrete dynamical systems. We present elementary sufficient conditions for asymptotic stability and nonconvergence to zero, together with sign-dependent instability criteria and multi-horizon parameter scans. These results provide a foundation for the nonlinear case, where proportional delay can strongly alter convergence rates, transient behavior, and sensitivity to initial conditions.}

{The nonlinear focus distinguishes the present investigation from the three linear studies discussed above \cite{buhmann1993stability,peics2000asymptotic,kundrat2006asymptotics}. We examine pantograph versions of the logistic and tent maps, including their exact composition structure in the homogeneous case, increasing stasis intervals, and initial-condition dependence over finite observation windows. In the inhomogeneous case, we investigate finite-time exponential separation of nearby trajectories. These observations are interpreted as finite-time multistability and evidence consistent with chaotic dynamics, rather than as proofs of distinct asymptotic attractors or positive asymptotic Lyapunov exponents.}

{The remainder of this paper is organized as follows. Section \ref{sec2} develops the theory of linear pantograph difference equations, including the form of solutions, delay-independent stability criteria, and a numerical and analytical framework for the unclassified regions of the parameter plane. It also extends the discussion to nonlinear delayed maps and provides sufficient conditions for local stability of equilibrium points. Section \ref{sec3} studies pantograph versions of the logistic and tent maps and illustrates the resulting hierarchical dynamics, multistability, and chaotic behavior through numerical simulations. Conclusions, discussion, and future directions are given in Section \ref{sec4}.}

\section{Delay Difference Equations} \label{sec2}

We begin with the linear delay difference equation
\begin{equation}
x_{n+1} = \lambda x_n + \delta x_{h(n)}, \quad n\in \mathbf{N}_0 =\{0,1,2,\dots\},   \label{pmap} 
\end{equation}
where $\lambda,\delta\in\mathbb{R}$ and $h:\mathbf{N}_0\longrightarrow\mathbf{N}_0$ is a time-dependent delay function satisfying $h(n)\leq n$.
When $h(n)=\lfloor an\rfloor$, with $0<a<1$, the delay is proportional and equation (\ref{pmap}) is a pantograph difference equation. Here $\lfloor\cdot\rfloor$ denotes the integer-part function. We also consider the sublinear proportional delay $h(n)=\lfloor an^\alpha\rfloor$, where $0<a<1$ and $0<\alpha<1$.

\subsection{Structure of the solution}
We first describe the structure of the solution of (\ref{pmap}). The standard solution formula for the nonhomogeneous first-order difference equation
\begin{equation}
x_{n+1} = \lambda x_n + g(n),
\end{equation}
is given by \cite[page 4]{elaydi2005introduction}
\begin{equation}
x_n=\lambda^n x_0 + \sum_{r=0}^{n-1}\lambda^{n-r-1}g(r).
\end{equation}
Setting $g(n)=\delta x_{h(n)}$ and, for simplicity, taking $x_0=1$, the solution of (\ref{pmap}) can be written as
\begin{equation}
x_n=\lambda^n  + \delta \sum_{r_{0}=0}^{n-1}\lambda^{n-r_0-1}x_{h(r_0)}.\label{gsol}
\end{equation}
Repeated substitution in (\ref{gsol}) yields
\begin{equation}
x_n=\lambda^n + \delta \sum_{r_{0}=0}^{n-1}\lambda^{n-r_0-1+h(r_0)} +\delta^2 \sum_{r_{0}=0}^{n-1}\sum_{r_{1}=0}^{h(r_0)-1} \lambda^{n-r_0-r_1-2+h(r_0)} x_{h(r_1)}.
\end{equation}
{Continuing this process generates a decreasing sequence of indices $n>r_0>r_1>r_2>\cdots>r_k$ that terminates when $h(r_k)=0$. Since $x_0=1$, the solution can therefore be expressed in the form
\begin{equation}
x_n=\lambda^n + \delta P_0(\lambda) + \delta^2 P_1(\lambda)+\cdots + \delta^{k+1} P_k(\lambda).
\end{equation}
where each $P_j(\lambda)$ is a polynomial in $\lambda$ with nonnegative integer coefficients.}

Consequently, $x_n$ is a polynomial of total degree $n$ in the parameters $\lambda$ and $\delta$. The term $\lambda^n$ is always present, whereas the highest power of $\delta$ may be much smaller than $n$ because the delayed indices decrease according to $h$. This slow growth of the powers of $\delta$ is one of the characteristic effects of proportional delay. Several explicit solution terms for $h(n)=\lfloor n/m\rfloor$, $m=2,3,4,5$, are listed in the Appendix.

\subsection{Stability Analysis}
\subsubsection{Delay-independent stability conditions}
\begingroup
Every real number is an equilibrium of (\ref{pmap}) precisely when $\lambda+\delta=1$. If $\lambda+\delta\ne1$, the only equilibrium is zero.

The following theorem gives a sufficient condition for stability that is independent of the particular choice of the delay function $h$.
\begin{theorem}\label{thm1}
Suppose that $h(n)\to\infty$ as $n\to\infty$. The zero solution of the linear delay difference equation (\ref{pmap}) is asymptotically stable if $|\lambda| + |\delta| < 1$.
\end{theorem}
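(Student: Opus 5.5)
We prove the two halves of asymptotic stability in turn: first stability (a uniform bound) by a maximum-principle argument, then attractivity by a lim-sup contraction that uses $h(n)\to\infty$. Throughout, set $\theta=|\lambda|+|\delta|<1$.

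\emph{Stability.} Define $M_n=\max_{0\le k\le n}|x_k|$. Since $h(n)\le n$, the triangle inequality applied to (\ref{pmap}) gives
\[
|x_{n+1}|\le |\lambda||x_n|+|\delta||x_{h(n)}|\le \theta M_n\le M_n .
\]
Hence $M_{n+1}=M_n$, and by induction $|x_n|\le M_0=|x_0|$ for all $n$. Given $\varepsilon>0$, the choice $|x_0|<\varepsilon$ therefore keeps the whole solution in the $\varepsilon$-neighbourhood of zero. If the intended setting allows several initial values (a history on $\{0,\dots,N_0\}$), the same argument works with $M_0$ replaced by the maximum over that initial block.

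\emph{Attractivity.} The sequence is bounded, so $L=\limsup_{n\to\infty}|x_n|$ is finite, and we show $L=0$. Fix $\eta>0$. Choose $N$ such that $|x_k|\le L+\eta$ for all $k\ge N$. Because $h(n)\to\infty$, there is $N_1\ge N$ with $h(n)\ge N$ for every $n\ge N_1$. For $n\ge N_1$ both $n$ and $h(n)$ are then at least $N$, so
\[
|x_{n+1}|\le |\lambda|(L+\eta)+|\delta|(L+\eta)=\theta(L+\eta).
\]
Taking $\limsup$ gives $L\le\theta(L+\eta)$. Letting $\eta\to0$ yields $L\le\theta L$, and since $\theta<1$ this forces $L=0$, i.e.\ $x_n\to0$.

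The hypothesis $h(n)\to\infty$ is the main obstacle. Without it, the delayed term could keep referring back to a fixed early value, and the lim-sup estimate would fail. This is precisely the step where the delay must eventually leave every finite initial segment. Neither monotonicity of $h$ nor any proportional structure is needed. If a quantitative rate were desired instead, one could iterate the bound over the blocks $[N_j,N_{j+1})$, with $N_{j+1}$ defined by $h(n)\ge N_j$ for $n\ge N_{j+1}$, to obtain $|x_n|\le\theta^{j}|x_0|$ on the $j$-th block. For $h(n)=\lfloor an\rfloor$ this gives algebraic decay consistent with Kundr\'at's exponent, but the qualitative limsup argument above suffices for the theorem.
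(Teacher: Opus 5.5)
Your proof is correct and follows the paper's argument. It uses the same induction giving $|x_n|\le|x_0|$ for stability, and the same lim-sup contraction $L\le(|\lambda|+|\delta|)L$ via $h(n)\to\infty$; your $\eta$-step simply makes explicit the paper's assertion that $\limsup_{n\to\infty}|x_{h(n)}|\le L$.

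Your closing rate remark is not needed for the theorem. Note also that the block iteration only yields the exponent $\log\theta/\log(1/a)$. This is in general weaker than Kundr\'at's exponent $\log\bigl(|\delta|/(1-|\lambda|)\bigr)/\log(1/a)$, because $|\delta|/(1-|\lambda|)\le\theta$.
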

\textbf{Proof:}
Set $q=|\lambda|+|\delta|<1$. Induction gives $|x_n|\leq |x_0|$ for every $n$, so the solution is bounded and the zero solution is stable. Let
\[
L=\limsup_{n\to\infty}|x_n|.
\]
Because $h(n)\to\infty$, we also have $\limsup_{n\to\infty}|x_{h(n)}|\leq L$. Taking the upper limit in
\[
|x_{n+1}|\leq |\lambda|\,|x_n|+|\delta|\,|x_{h(n)}|
\]
therefore yields $L\leq qL$. Since $q<1$, it follows that $L=0$, and hence $x_n\to0$.
\qed

\textbf{Note}: The assumption $h(n)\to\infty$ is essential. For example, if $h(n)=0$ and $\delta\ne0$, the persistent term $\delta x_0$ generally prevents convergence to zero. The condition $|\lambda|+|\delta|<1$ is sufficient but not necessary for proportional delays. Figure \ref{fig02} shows a numerically decaying orbit of (\ref{pmap}) with $\lambda=-0.50$, $\delta=0.52$, and $h(n)=\lfloor 0.2n\rfloor$, even though $|\lambda|+|\delta|>1$.

\begin{figure}[h]
    \centering
    \includegraphics[width=\linewidth]{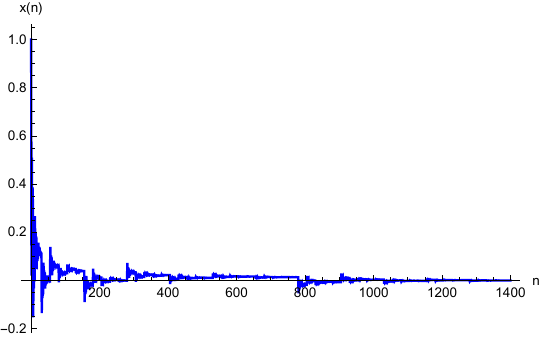}
    \caption{Numerically decaying orbit of (\ref{pmap}) for $h(n)=\lfloor0.2n\rfloor$, $\lambda=-0.50$, and $\delta=0.52$, outside the sufficient region $|\lambda|+|\delta|<1$.}
    \label{fig02}
\end{figure}

We next give a sufficient condition for instability in the first quadrant.
\begin{theorem} \label{thm2}
If $\lambda,\delta\geq0$ and $\lambda+\delta>1$, then the zero solution of (\ref{pmap}) is not asymptotically stable for any delay function satisfying $h(n)\leq n$.
\end{theorem}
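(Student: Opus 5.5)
Since $\lambda,\delta\ge 0$, all coefficients in (\ref{pmap}) are nonnegative, so positive initial data produce positive, monotonically controlled solutions. I will show that for every $\varepsilon>0$ the initial value $x_0=\varepsilon$ generates a solution that does not tend to zero, and in fact satisfies $x_n\ge\varepsilon$ for all $n$. This shows that zero is not attractive within any neighbourhood, so it is not asymptotically stable. The argument uses no structure of $h$ beyond $h(n)\le n$, in line with the claim that the result holds for any delay function.

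The key step is an induction showing that the sequence is nondecreasing. Set $x_0=\varepsilon>0$ and claim that $x_0\le x_1\le\cdots\le x_n$ for all $n$. For the base case, $x_1=\lambda x_0+\delta x_{h(0)}=(\lambda+\delta)x_0>x_0$, because $h(0)=0$. For the inductive step, suppose $x_0\le\cdots\le x_n$, so in particular all these terms are positive. Then $h(n)\le n$ gives $x_{h(n)}\ge x_0>0$, and hence
\[
x_{n+1}=\lambda x_n+\delta x_{h(n)}\ge \lambda x_n+\delta x_0 .
\]
This lower bound alone does not give $x_{n+1}\ge x_n$, so a sharper invariant is needed. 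I will instead prove the stronger statement
\[
x_{n+1}\ge (\lambda+\delta)\,x_0 \quad\text{and}\quad x_{n+1}\ge x_n .
\]

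The second inequality, monotonicity, is the main obstacle, because $x_{h(n)}$ may be much smaller than $x_n$. The case $\lambda\ge1$ is immediate: $x_{n+1}\ge\lambda x_n\ge x_n$. When $\lambda<1$, monotonicity can genuinely fail. For example, with $\lambda=0.5$, $\delta=0.6$ and $h(n)=0$ we get $x_n\to 0.6x_0/(1-0.5)=1.2x_0$ from values that temporarily exceed this limit. I therefore abandon monotonicity and use the weaker invariant $x_n\ge x_0$ for all $n$. Assuming $x_k\ge x_0$ for all $k\le n$, we obtain
\[
x_{n+1}=\lambda x_n+\delta x_{h(n)}\ge(\lambda+\delta)x_0> x_0 .
\]
This closes the induction with nothing beyond $h(n)\le n$ and the nonnegativity of the coefficients.

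It follows that $x_n\ge\varepsilon$ for all $n$, so the solution does not converge to zero for any $\varepsilon>0$, however small. Hence the zero solution is not asymptotically stable. As a remark, when $h(n)\to\infty$ one could refine this to $\liminf x_n\ge(\lambda+\delta)\liminf x_n$, which forces unboundedness, but the statement does not require this.
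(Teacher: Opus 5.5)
Your final argument, strong induction on the invariant $x_n\ge x_0$ using $h(n)\le n$ and $\lambda,\delta\ge0$ to get $x_{n+1}\ge(\lambda+\delta)x_0>x_0$, is correct and is exactly the paper's proof. One aside is wrong but harmless: in your example $\lambda=0.5$, $\delta=0.6$, $h\equiv0$, the iterates $x_{n+1}=0.5x_n+0.6x_0$ increase monotonically to $1.2x_0$ rather than overshooting it (as Theorem~\ref{thm5} predicts for non-decreasing $h$), so monotonicity can only fail for a non-monotone $h$; since you discard monotonicity anyway, this does not affect the proof.
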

\textbf{Proof:} Choose $x_0>0$. If $x_j\geq x_0$ for $0\leq j\leq n$, then
\[
x_{n+1}=\lambda x_n+\delta x_{h(n)}\geq(\lambda+\delta)x_0>x_0.
\]
The assertion follows by induction: the solution cannot converge to zero.
\qed
\bigskip

\begin{theorem} \label{thm5}
Under the hypotheses of Theorem \ref{thm2}, if $\lambda>0$, $\delta>0$, and $h$ is monotone non-decreasing, then the solution with $x_0>0$ is strictly increasing.
\end{theorem}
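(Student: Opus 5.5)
The plan is a strong induction on $n$ for the statement $x_{n+1}>x_n$. The key tool is the difference of two consecutive instances of (\ref{pmap}). The hypothesis $h(n)\le n$ guarantees that every delayed index has already been handled by the induction. Monotonicity of $h$ makes the delayed increment nonnegative, and $\lambda>0$ makes the instantaneous increment strictly positive.

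\emph{Base case.} Since $h(0)\le 0$ and $h$ takes values in $\mathbf{N}_0$, we have $h(0)=0$. Hence $x_1=\lambda x_0+\delta x_0=(\lambda+\delta)x_0$, and $x_1>x_0$ because $x_0>0$ and $\lambda+\delta>1$. This is the same computation as in the proof of Theorem~\ref{thm2}.

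\emph{Inductive step.} Assume $x_0<x_1<\cdots<x_n$ for some $n\ge1$. Subtracting (\ref{pmap}) at index $n-1$ from (\ref{pmap}) at index $n$ gives
\[
x_{n+1}-x_n=\lambda\,(x_n-x_{n-1})+\delta\,\bigl(x_{h(n)}-x_{h(n-1)}\bigr).
\]
The first term is strictly positive, because $\lambda>0$ and $x_n>x_{n-1}$ by the induction hypothesis. For the second term, $h$ is non-decreasing, so $h(n-1)\le h(n)$. Both indices are at most $n$, since $h(n)\le n$. The induction hypothesis says $(x_j)_{0\le j\le n}$ is strictly increasing, so $x_{h(n)}\ge x_{h(n-1)}$, with equality exactly when $h(n)=h(n-1)$. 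Together with $\delta>0$, this makes the second term nonnegative. Therefore $x_{n+1}>x_n$, which closes the induction.

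There is no real obstacle here. The only point needing care is that the delayed indices $h(n)$ and $h(n-1)$ lie in the range already covered by the induction hypothesis, which is why strong induction is used and why $h(n)\le n$ is required. Strictness comes entirely from $\lambda>0$, since the delayed term may vanish on plateaus of $h$. The conclusion also yields positivity, $x_n\ge x_0>0$, consistent with Theorem~\ref{thm2}.
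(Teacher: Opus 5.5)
Your proof is correct and matches the paper's argument: the same base case $x_1=(\lambda+\delta)x_0>x_0$ from $h(0)=0$, and the same inductive step using $\lambda>0$, $x_n>x_{n-1}$, and $x_{h(n)}\ge x_{h(n-1)}$, which follows from monotonicity of $h$ and $h(n)\le n$. Writing the step as a difference identity rather than the paper's direct chain of inequalities is only cosmetic, and your explicit remark that $h(n)\le n$ keeps the delayed indices inside the induction range spells out a detail the paper leaves implicit.
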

\textbf{Proof:} Assume $x_0>0$. Since $\lambda>0$ and $\delta>0$, all subsequent iterates are positive.

We show by induction that $\{x_n\}$ is strictly increasing and hence does not converge to zero.

Indeed, since $h(0)=0$,
\begin{equation*}
x_1=\lambda x_0 + \delta x_{h(0)}=  \lambda x_0 + \delta x_0 > x_0.  
\end{equation*}

Assume that $x_n>x_{n-1}$ for $n=1,2,\cdots,k$. Since $h$ is monotone non-decreasing, $h(k)\geq h(k-1)$. Therefore,

\begin{equation*}
    x_{k+1}=\lambda x_k + \delta x_{h(k)} > \lambda x_{k-1} + \delta x_{h(k-1)}=x_k.
\end{equation*}
Thus, $x_n$ is strictly increasing by induction.
\qed
\bigskip

The monotonicity assumption in Theorem \ref{thm5} is needed only for strict monotonic growth, not for the nonconvergence conclusion of Theorem \ref{thm2}. 

For the pantograph delay $h(n)=\lfloor an\rfloor$, the increasing solution in Theorem \ref{thm5} is in fact unbounded. Indeed, if it were bounded, it would converge to a limit $L\geq x_0>0$. Since $h(n)\to\infty$, we would also have $x_{h(n)}\to L$, and passage to the limit in (\ref{pmap}) would give $L=(\lambda+\delta)L$, contradicting $\lambda+\delta>1$. Thus, in the first quadrant, $\lambda+\delta<1$ gives asymptotic decay by Theorem \ref{thm1}, $\lambda+\delta=1$ gives the constant solution $x_n=x_0$, and $\lambda+\delta>1$ gives unbounded growth for $x_0>0$.

Now, we propose a result providing a few more instability regions in the parameter plane.
\begin{theorem}\label{thm3}
If
\[
|\lambda|>1+|\delta|,
\]
then the zero solution of (\ref{pmap}) is unstable for every delay function satisfying $h(n)\leq n$. More precisely, every solution with $x_0\ne0$ is unbounded.
\end{theorem}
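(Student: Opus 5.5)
The plan is to show that the running maximum $M_n=\max_{0\le j\le n}|x_j|$ grows geometrically. This mirrors the induction in Theorem~\ref{thm2}, with the triangle inequality in reverse. Set $\rho=|\lambda|-|\delta|$, so that the hypothesis reads $\rho>1$.

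The key estimate uses $h(n)\le n$, which gives $|x_{h(n)}|\le M_n$. Hence
\[
|x_{n+1}|\ \ge\ |\lambda|\,|x_n|-|\delta|\,|x_{h(n)}|\ \ge\ |\lambda|\,|x_n|-|\delta|\,M_n .
\]
This bound is only useful when $|x_n|$ is itself the running maximum. So the induction hypothesis will be that $|x_n|=M_n$, i.e.\ the current iterate is the largest so far.

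Under that hypothesis the estimate gives $|x_{n+1}|\ge (|\lambda|-|\delta|)M_n=\rho M_n>M_n$. Consequently $M_{n+1}=|x_{n+1}|$, and the hypothesis propagates to step $n+1$.

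The base case is $n=0$, where trivially $|x_0|=M_0$, and $|x_0|>0$ since $x_0\neq0$. Induction then yields $|x_{n+1}|\ge\rho|x_n|$ for all $n$, and therefore $|x_n|\ge\rho^n|x_0|\to\infty$. Thus every nonzero solution is unbounded.

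Instability of the zero solution in the Lyapunov sense follows directly. For any initial value $x_0$ with $|x_0|$ as small as we like, the orbit eventually leaves every bounded neighbourhood of $0$.

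The main obstacle is choosing the right inductive invariant. A naive bound on $|x_{n+1}|$ in terms of $|x_n|$ alone fails, because the delayed term may be larger than the current one. Carrying the statement ``$|x_n|$ equals the running maximum'' handles this.

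This invariant requires no monotonicity, proportionality, or divergence of $h$; only $h(n)\le n$ is used. It therefore covers every admissible delay, as the theorem claims.
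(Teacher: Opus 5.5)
Your proof is correct and follows essentially the same route as the paper. The paper's inductive hypothesis $|x_0|<|x_1|<\cdots<|x_n|$ plays the role of your running-maximum invariant $|x_n|=M_n$. Both arguments then use the reverse triangle inequality with $|x_{h(n)}|\le|x_n|$ to obtain $|x_{n+1}|\ge(|\lambda|-|\delta|)|x_n|$, hence $|x_n|\ge(|\lambda|-|\delta|)^n|x_0|\to\infty$.
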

\textbf{Proof:}
Set $q=|\lambda|-|\delta|>1$. We prove inductively that
\[
|x_{n+1}|\geq q|x_n|>|x_n|.
\]
Since $h(0)=0$, the reverse triangle inequality gives
\[
|x_1|=|\lambda+\delta|\,|x_0|
\geq (|\lambda|-|\delta|)|x_0|=q|x_0|.
\]
Now suppose that $|x_0|<|x_1|<\cdots<|x_n|$. Because $h(n)\leq n$, we have $|x_{h(n)}|\leq |x_n|$. Hence
\[
\begin{aligned}
|x_{n+1}|
&=|\lambda x_n+\delta x_{h(n)}|\\
&\geq |\lambda|\,|x_n|-|\delta|\,|x_{h(n)}|\\
&\geq (|\lambda|-|\delta|)|x_n|
=q|x_n|.
\end{aligned}
\]
Thus $|x_n|\geq q^n|x_0|\to\infty$, proving instability.
\qed
\bigskip

The next results adds unstable regions for the particular case $h(n)=\lfloor an\rfloor$.
\begin{theorem}\label{thm4}
Let $h(n)=\lfloor an\rfloor$, where $0<a<1$. If
\[
|\delta|>1+|\lambda|,
\]
then the zero solution of (\ref{pmap}) is unstable. More precisely, every solution with $x_0\ne0$ has an unbounded subsequence.
\end{theorem}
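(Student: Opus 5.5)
The plan is to argue by contradiction through a global supremum bound. The point is that the delayed term $\delta x_{h(n)}$ can be isolated from the recurrence, and for the pantograph delay every index is realized as a delayed index. Suppose, for contradiction, that some solution with $x_0\neq0$ is bounded, and set
\[
M=\sup_{n\geq0}|x_n|<\infty .
\]
Note that $M\geq |x_0|>0$. Rewriting (\ref{pmap}) as $\delta x_{h(n)}=x_{n+1}-\lambda x_n$ and applying the triangle inequality gives, for every $n\geq0$,
\[
|\delta|\,|x_{h(n)}|\leq |x_{n+1}|+|\lambda|\,|x_n|\leq (1+|\lambda|)M .
\]

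The key step, and the only place where the specific form $h(n)=\lfloor an\rfloor$ enters, is to show that $h$ maps $\mathbf{N}_0$ onto $\mathbf{N}_0$.
\begin{itemize}
\item Since $0<a<1$, the increment $h(n+1)-h(n)=\lfloor an+a\rfloor-\lfloor an\rfloor$ lies in $\{0,1\}$.
\item Moreover $h(0)=0$ and $h(n)\to\infty$.
\item A nondecreasing integer sequence that starts at $0$, moves in steps of $0$ or $1$, and tends to infinity hits every nonnegative integer.
\end{itemize}
Hence for each $m\in\mathbf{N}_0$ there is an $n$ with $h(n)=m$. Substituting into the displayed bound gives $|\delta|\,|x_m|\leq(1+|\lambda|)M$ for every $m$, and taking the supremum over $m$ yields
\[
|\delta|\,M\leq(1+|\lambda|)\,M .
\]

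Since $M>0$, this forces $|\delta|\leq 1+|\lambda|$, contradicting the hypothesis $|\delta|>1+|\lambda|$. Therefore every solution with $x_0\neq0$ is unbounded. An unbounded real sequence has a subsequence with $|x_{n_k}|\to\infty$, which is the stated conclusion. In particular, solutions starting arbitrarily close to zero do not remain in any fixed neighbourhood of zero, so the zero solution is unstable.

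I expect no serious obstacle. The one point needing care is surjectivity of $h$: using the supremum over all $n$, rather than a $\limsup$ as in Theorem~\ref{thm1}, avoids any issue with whether the supremum is attained. Surjectivity is also why the result is stated only for the proportional delay and not for general $h$ as in Theorem~\ref{thm3}. If $h$ skipped indices, the bound would only control $|x_m|$ on the range of $h$, and the argument would require the supremum over that range to equal $M$.
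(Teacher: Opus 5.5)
Your proof is correct. It uses the same two ingredients as the paper but combines them differently.

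\textbf{Shared ingredients.} Both arguments rest on the rearranged inequality $|\delta|\,|x_{h(n)}|\le |x_{n+1}|+|\lambda|\,|x_n|$. Both also use the fact that every index $k$ is a delayed index $\lfloor an\rfloor$. The paper obtains this from the interval $[k/a,(k+1)/a)$ having length $1/a>1$, which also gives a preimage $n>k$. You obtain it from the increments of $h$ lying in $\{0,1\}$.

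\textbf{Different assembly.} You apply the inequality once against a global supremum and conclude by contradiction. The paper iterates it instead: it picks $n_j$ with $\lfloor an_j\rfloor=k_j$ and lets $k_{j+1}\in\{n_j,n_j+1\}$ realize $\max\{|x_{n_j}|,|x_{n_j+1}|\}$. This produces an explicit subsequence with $|x_{k_j}|\ge c^j|x_0|$, where $c=|\delta|/(1+|\lambda|)$.

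\textbf{What each route gives.} Your version is shorter and makes clear that surjectivity of $h$ is the only property of the delay being used. It therefore carries over unchanged to, for instance, $h(n)=\lfloor an^{\alpha}\rfloor$, whose increments also lie in $\{0,1\}$. The paper's constructive version is quantitative. Since $k_{j+1}\le n_j+1<(k_j+1)/a+1$, the indices $k_j$ grow at most like a constant times $a^{-j}$. This yields a polynomial lower bound $\max_{n\le N}|x_n|\gtrsim N^{\log c/\log(1/a)}$ on the running maximum, which your contradiction argument does not provide.
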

\textbf{Proof:}
Set
\[
c=\frac{|\delta|}{1+|\lambda|}>1.
\]
For every integer $k\geq0$, the interval
\[
\left[\frac{k}{a},\frac{k+1}{a}\right)
\]
has length $1/a>1$. It therefore contains an integer $n>k$ satisfying $\lfloor an\rfloor=k$.

Starting with $k_0=0$, suppose that $k_j$ has been chosen, and select $n_j>k_j$ such that $\lfloor an_j\rfloor=k_j$. From (\ref{pmap}),
\[
\delta x_{k_j}=x_{n_j+1}-\lambda x_{n_j}.
\]
Consequently,
\[
\begin{aligned}
|\delta|\,|x_{k_j}|
&\leq |x_{n_j+1}|+|\lambda|\,|x_{n_j}|\\
&\leq(1+|\lambda|)
\max\{|x_{n_j}|,|x_{n_j+1}|\}.
\end{aligned}
\]
Choose $k_{j+1}\in\{n_j,n_j+1\}$ so that
\[
|x_{k_{j+1}}|=\max\{|x_{n_j}|,|x_{n_j+1}|\}.
\]
Then $k_{j+1}>k_j$ and
\[
|x_{k_{j+1}}|\geq c|x_{k_j}|.
\]
It follows inductively that
\[
|x_{k_j}|\geq c^j|x_0|\longrightarrow\infty.
\]
Thus the solution has an unbounded subsequence, and the zero solution is unstable.
\qed
\bigskip

Theorems \ref{thm3} and \ref{thm4} also give rigorous instability regions outside the first quadrant. In the second quadrant, where $\lambda<0$ and $\delta>0$, the regions
\[
\delta>1-\lambda
\qquad\text{and}\qquad
\delta<-1-\lambda
\]
are unstable. The first inequality follows from Theorem \ref{thm4}, whereas the second follows from Theorem \ref{thm3}. In the third quadrant, where $\lambda<0$ and $\delta<0$, the regions
\[
\delta>1+\lambda
\qquad\text{and}\qquad
\delta<\lambda-1
\]
are unstable by Theorems \ref{thm3} and \ref{thm4}, respectively. In particular, Theorem \ref{thm4} proves instability throughout $\lambda\leq0$, $\delta<\lambda-1$, for proportional delays.
\bigskip

Outside the sufficient stability region, behavior can depend on the signs of the coefficients and on the delay. 

\subsubsection{Numerical experiments for the unclassified regions in the parameter plane}
The observations in the regions of the $(\lambda,\delta)$-parameter plane which are not covered by results in the previous subsection require particular care because pantograph delays can produce extremely long transients. A trajectory that appears bounded or decaying over a fixed time interval need not have reached its asymptotic regime. We therefore distinguish three types of behavior and provide a color code for the corresponding parameter values:
\begin{enumerate}
\item asymptotic decay, $x_n\to0$ with color code yellow;
\item bounded nonconvergent behavior, for which $\sup_n|x_n|<\infty$ but $x_n\not\to0$ with color code blue;
\item unbounded behavior, for which $\sup_n|x_n|=\infty$ with color code red.
\end{enumerate}
Because (\ref{pmap}) is linear and homogeneous in the initial value, it is sufficient to take $x_0=1$ when determining these alternatives; every other solution is obtained by scalar multiplication.

\paragraph{Finite-time parameter scan.}
To obtain a common numerical picture of the three quadrants, we used $h(n)=\lfloor an\rfloor$ with
\[
a\in\{0.1,0.2,\ldots,0.9\}.
\]
For each value of $a$, the $(\lambda,\delta)$-plane was sampled on a square mesh of spacing $0.1$ in $[-10,10]^2$. In the second, third, and fourth quadrants, only points which are not covered by the Theorems \ref{thm1}, \ref{thm2}, \ref{thm5}, \ref{thm3} and \ref{thm4} required numerical classification. At every sampled point, equation (\ref{pmap}) was iterated from $x_0=1$ up to $N=50000$. The following operational rules were used:
\begin{enumerate}
\item the point was classified as \emph{threshold-growing} if $|x_n|>10^4$ for some $n\leq N$, in which case the iteration was stopped and the point was saved in a list with color code red;
\item if the threshold was never crossed, the point was classified as \emph{numerically decaying} when
\[
\max_{N-9\leq n\leq N}|x_n|<10^{-2}
\]
and the point was saved in a list with color code yellow;
\item every remaining point was classified as \emph{finite-time bounded and nondecaying} and saved in a list with color code blue.
\end{enumerate}

\paragraph{Rescan of the initially blue region.}
The preceding finite-time criterion can incorrectly place an ultra-slow growing orbit in the bounded class. We therefore rescanned every initially blue mesh point. First, the line $\lambda+\delta=1$ was treated analytically rather than numerically, because $x_n\equiv x_0$ is an exact solution on this line; direct floating-point iteration may incorrectly destabilize this solution through amplified roundoff errors. Every other blue point was iterated to $N=10^6$. At the horizons
\[
H\in\{5\times10^4,10^5,2\times10^5,5\times10^5,10^6\},
\]
we recorded the half-window envelope
\[
E(H)=\max_{H/2<n\leq H}|x_n|
\]
and the root-mean-square amplitude over the same window,
\[
R(H)=\left(\frac{2}{H}\sum_{H/2<n\leq H}|x_n|^2\right)^{1/2}.
\]
The envelope $E(H)$ is a windowed maximum-norm measure, whereas $R(H)$ is the corresponding RMS measure of the typical signal magnitude and is less sensitive to an isolated spike; maximum-norm and RMS summaries are standard tools in numerical and random-data analysis \cite{press2007numerical,bendat2010random}. For either amplitude $A\in\{E,R\}$, the log--log slope between two horizons $H_i<H_j$ was computed as
\[
s_A(H_i,H_j)=\frac{\log\!\left(A(H_j)/A(H_i)\right)}{\log(H_j/H_i)}.
\]
Thus, if $A(H)$ behaves approximately as $CH^\beta$, then $s_A\approx\beta$: a positive value indicates increasing amplitude, a negative value indicates decay, and a value near zero indicates bounded or exceptionally slow behavior. This interpretation follows the standard use of slopes on logarithmic axes to estimate effective power-law exponents \cite{clauset2009powerlaw}. As emphasized in the power-law literature, however, a positive log--log slope by itself is not a goodness-of-fit test or a proof of asymptotic growth; here it is used only as a finite-horizon diagnostic. In the scan to $10^6$, growth was called persistent when the envelope increased by more than $1\%$ on at least two of the four successive horizon pairs and, in addition, either
\[
\frac{E(10^6)}{E(5\times10^4)}>1.05,
\qquad s_E(5\times10^4,10^6)>0.015,
\]
or the same two inequalities held with $E$ replaced by $R$. A point was also assigned to the growing class when the envelope ratio exceeded $1.10$ and its corresponding slope exceeded $0.03$. Points whose trend remained ambiguous were extended to $N=5\times10^6$. For this extended scan, the endpoint horizons $10^6$ and $5\times10^6$ were compared, and growth was recorded when the envelope or RMS ratio exceeded $1.05$ and its corresponding slope exceeded $0.015$. Independently of these slope criteria, crossing $|x_n|=10^4$ was taken as direct numerical evidence of growth. These finite-horizon tests provide numerical evidence rather than a proof of asymptotic instability. A decreasing envelope was assigned to the decaying class, while a point with no sustained upward or downward trend was retained only as a bounded candidate.



In Figure \ref{fig:linear-regime-diagrams}, the solid black diamond is the theoretical boundary $|\lambda|+|\delta|=1$ from Theorem \ref{thm1}; its strict interior is colored yellow analytically and was not numerically scanned. Solid red indicates the analytically proved unstable regions from Theorems \ref{thm2}--\ref{thm4}. Outside the analytically classified regions, colors are shown only at sampled mesh points. Small red squares indicate numerical instability or growth, including points in the extended plotting region whose trajectories cross $10^4$ by $N=50000$; yellow squares indicate numerical decay; and blue squares indicate either an exact constant solution or a bounded candidate retained after the multi-horizon rescan to as many as $5\times10^6$ iterates. 

\begin{center}
    \includegraphics[width=0.48\textwidth]{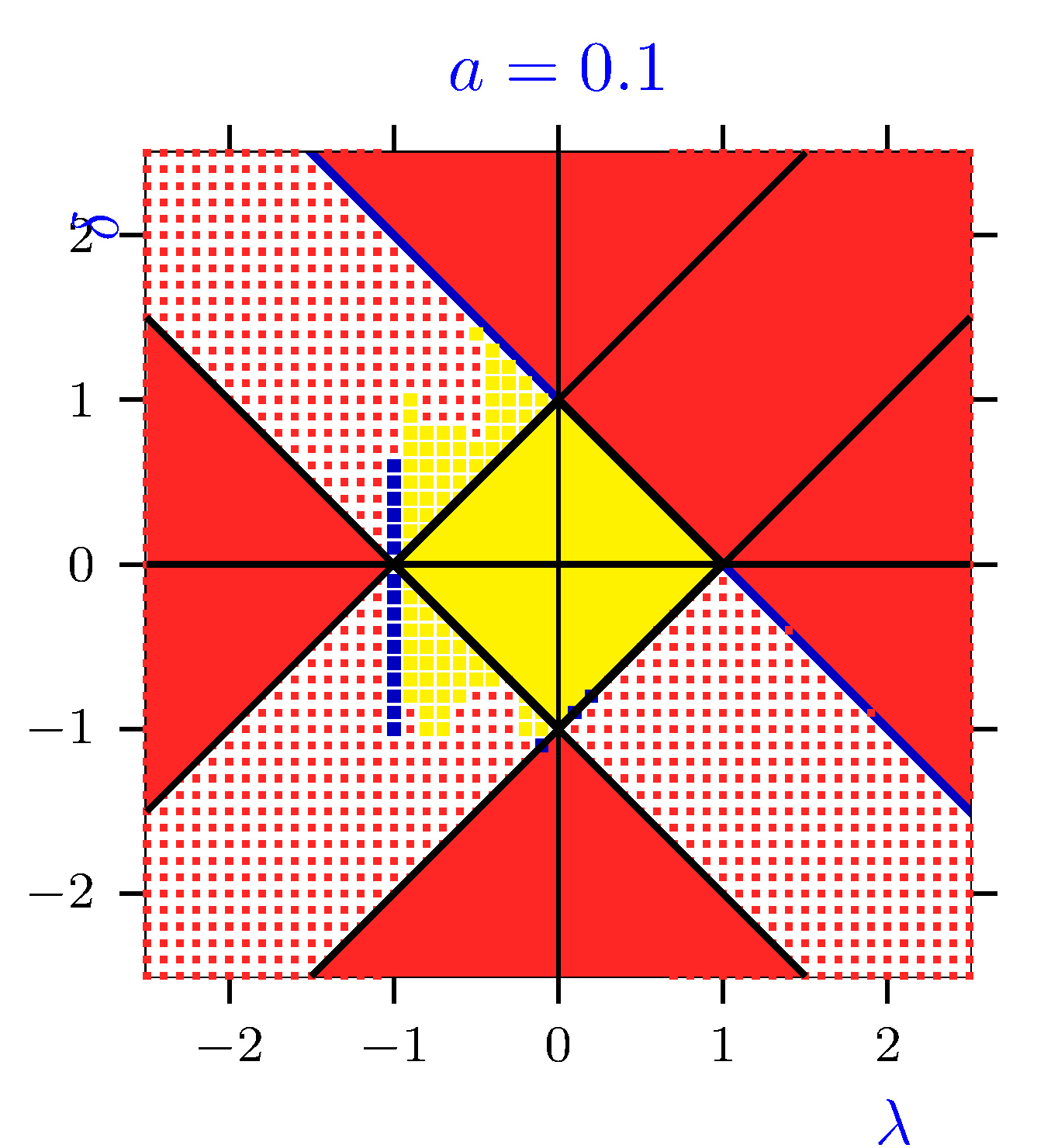}\hfill
    \includegraphics[width=0.48\textwidth]{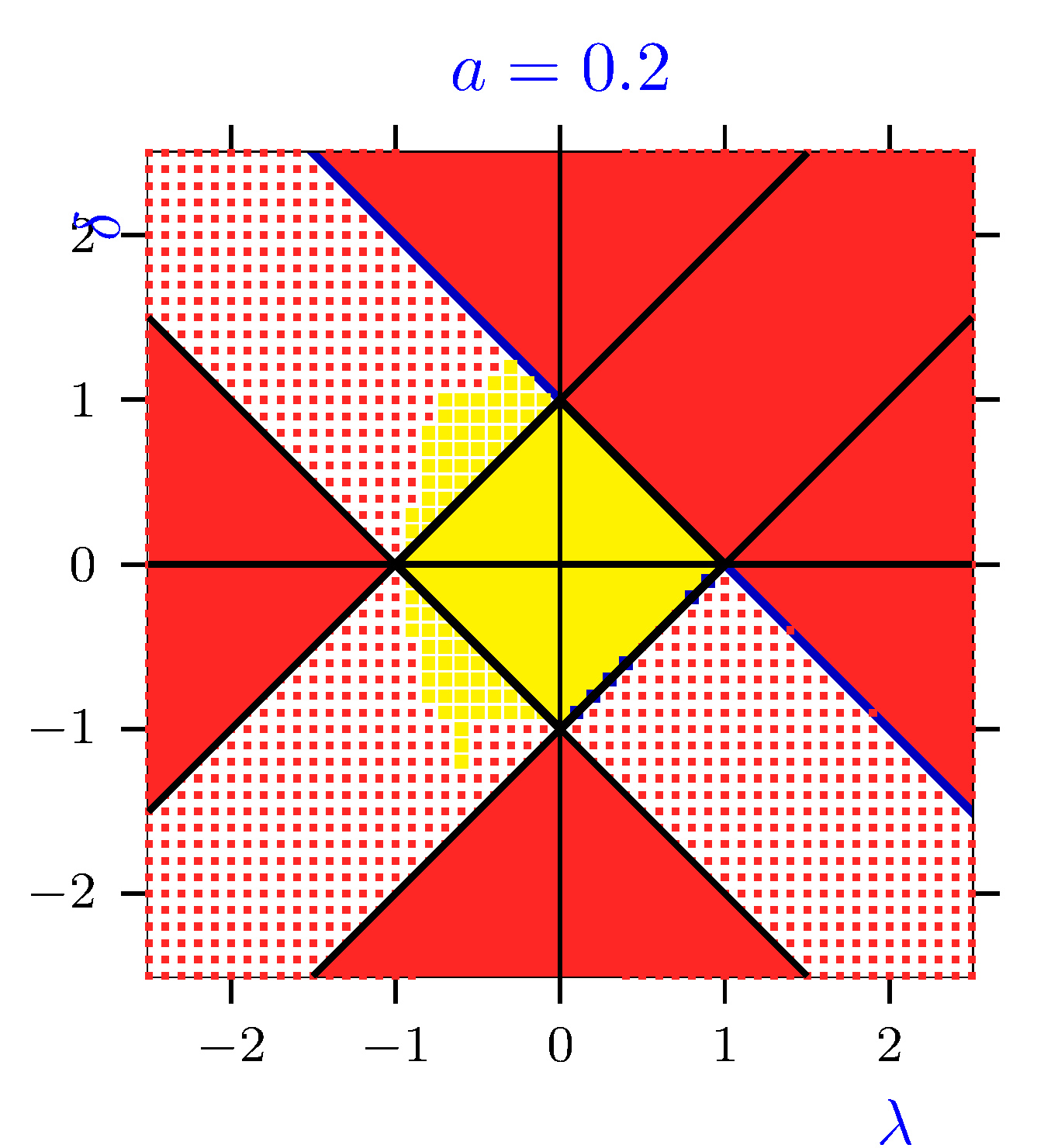}

    \medskip
    \includegraphics[width=0.48\textwidth]{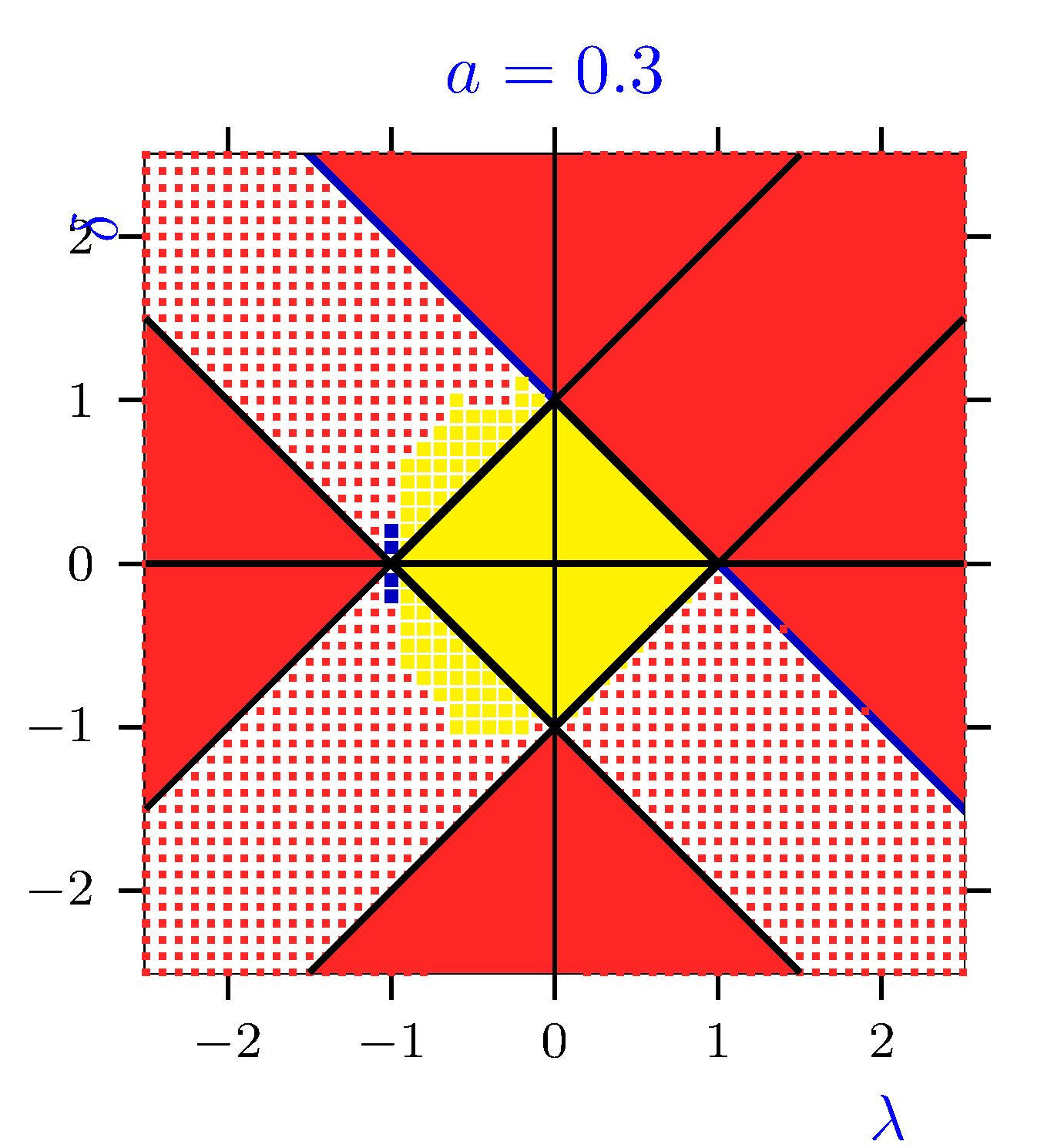}\hfill
    \includegraphics[width=0.48\textwidth]{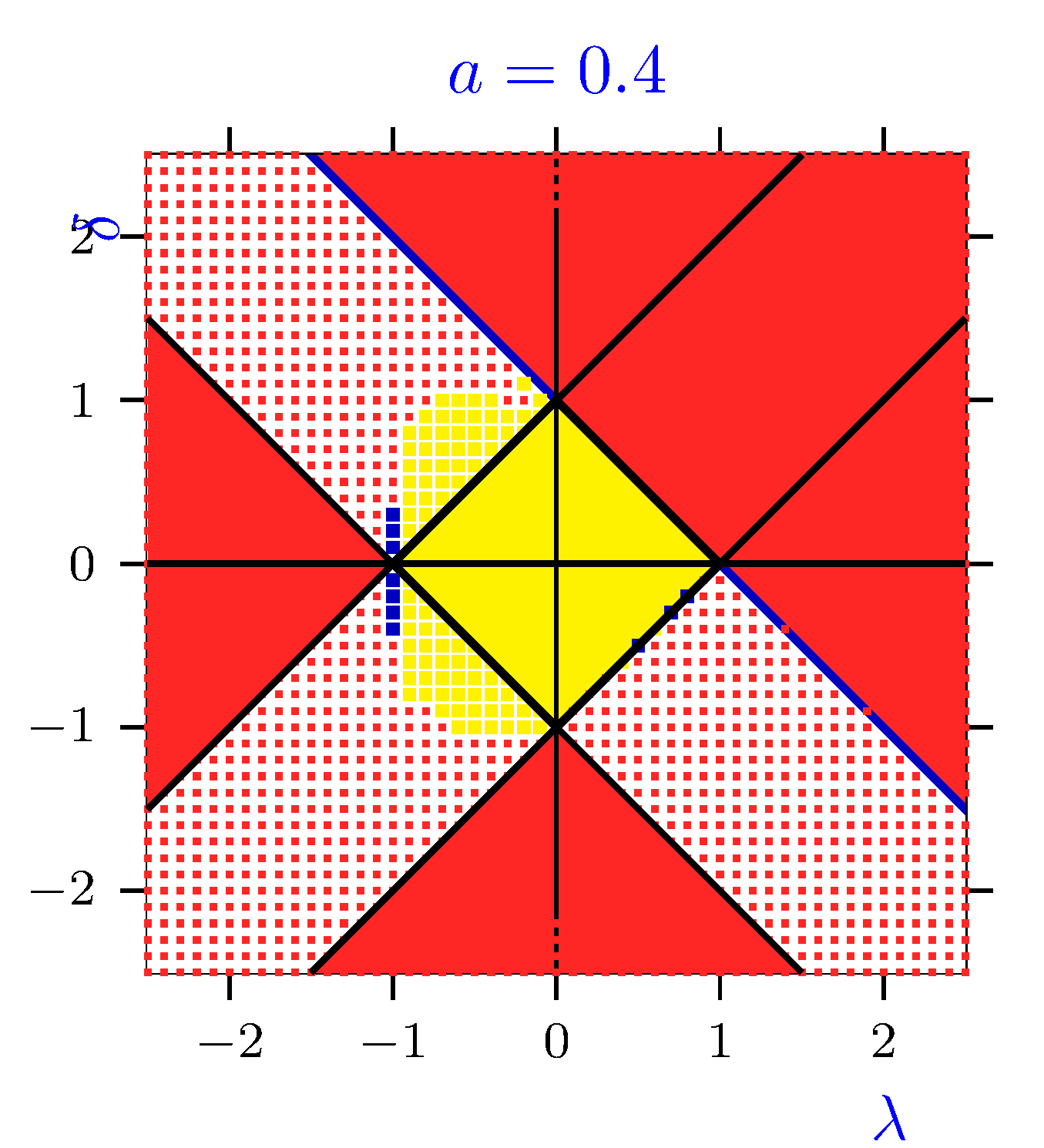}

    \medskip
    {\small
    \colorbox{yellow}{\strut\hspace{1em}} decaying to zero\quad
    \colorbox{red!85}{\strut\hspace{1em}} unstable/growing, including ultra-slow growth

    \smallskip
    \colorbox{blue!75!black}{\strut\hspace{1em}} exact constant solution or bounded candidate}
    \captionof{figure}{Refined analytical--numerical regime diagrams for (\ref{pmap}) with $h(n)=\lfloor an\rfloor$, $x_0=1$, and $a=0.1,0.2,\ldots,0.9$.}
    \label{fig:linear-regime-diagrams}
\end{center}

\begin{center}
    \includegraphics[width=0.48\textwidth]{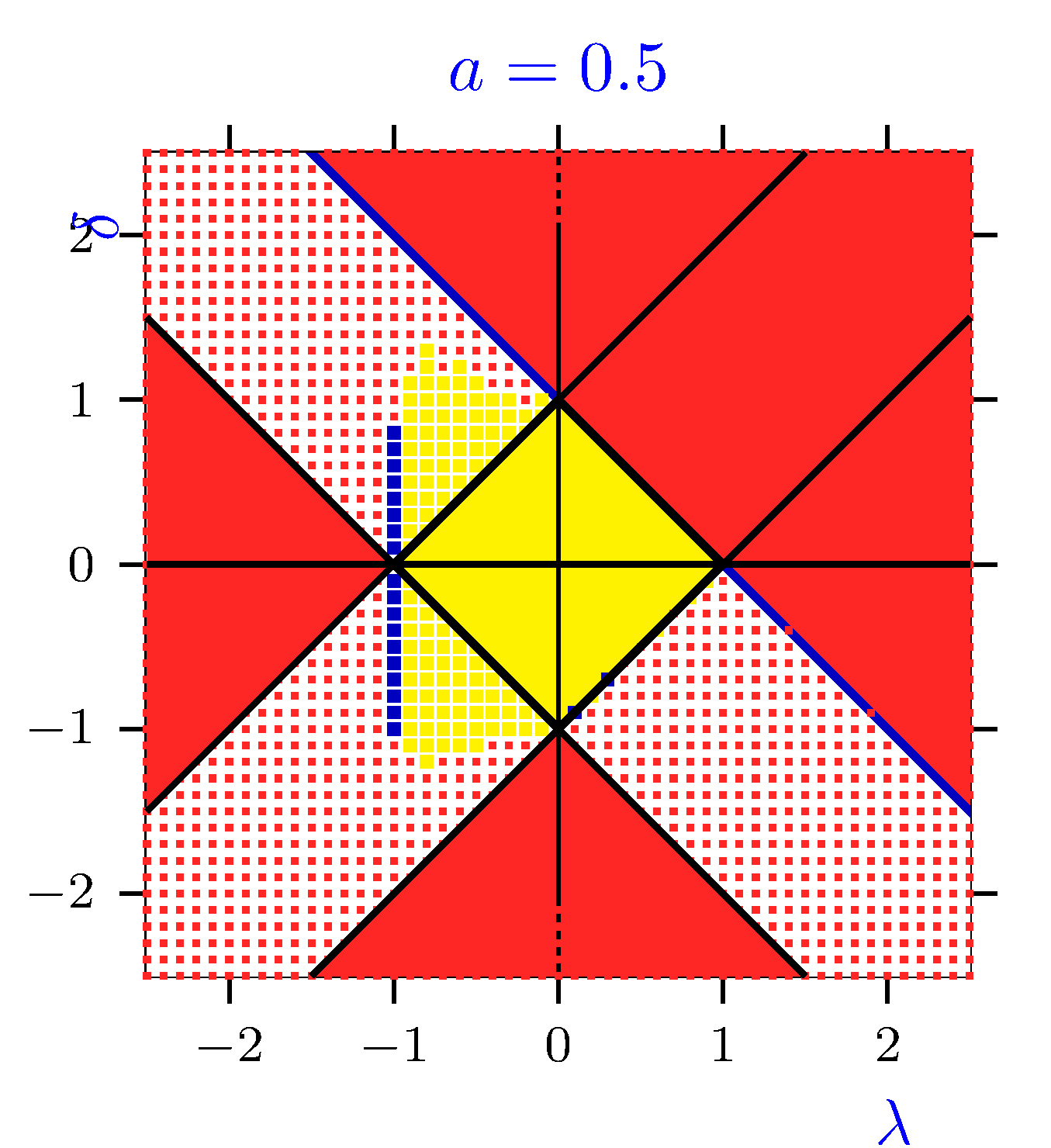}\hfill
    \includegraphics[width=0.48\textwidth]{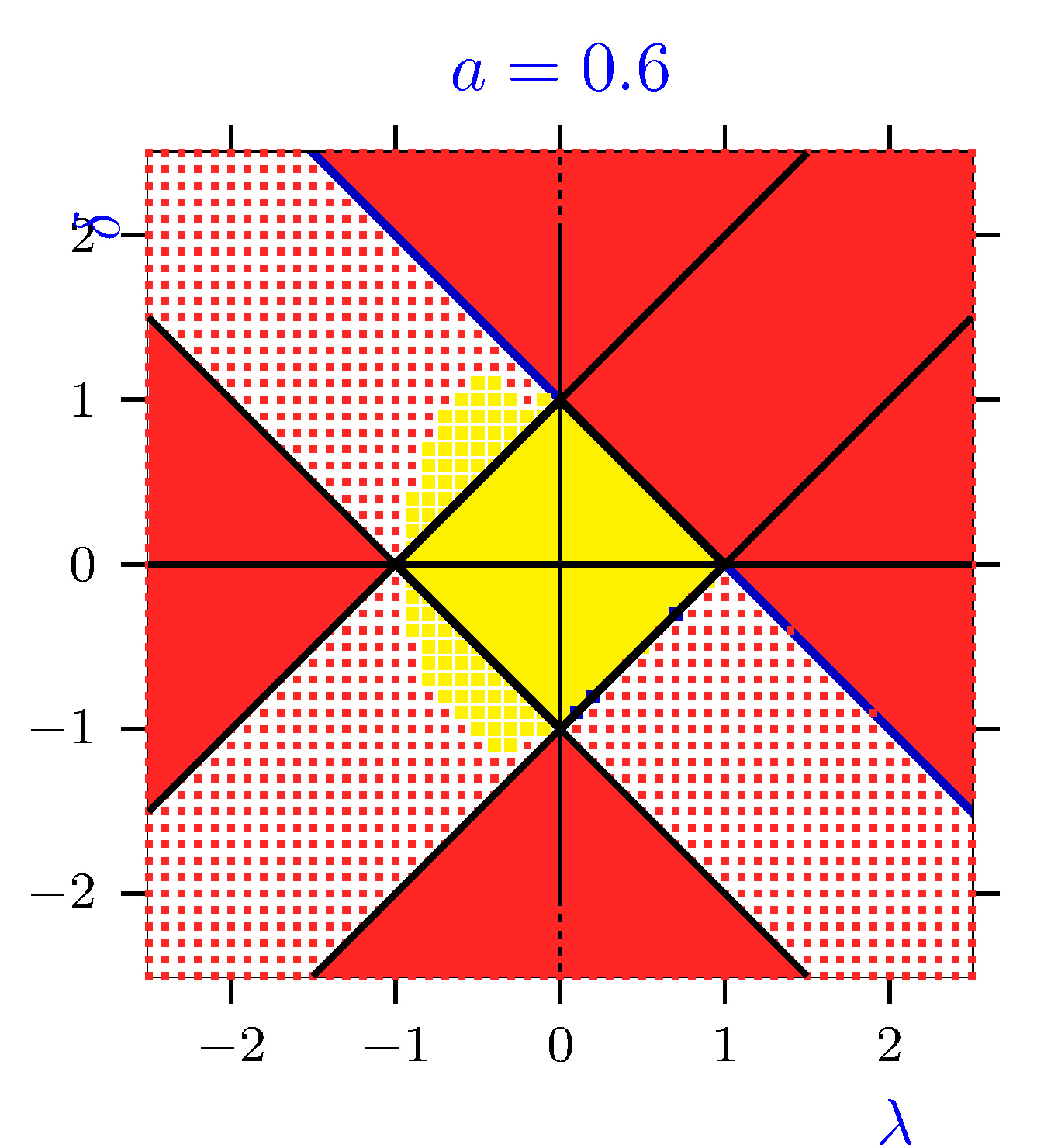}

    \medskip
    \includegraphics[width=0.48\textwidth]{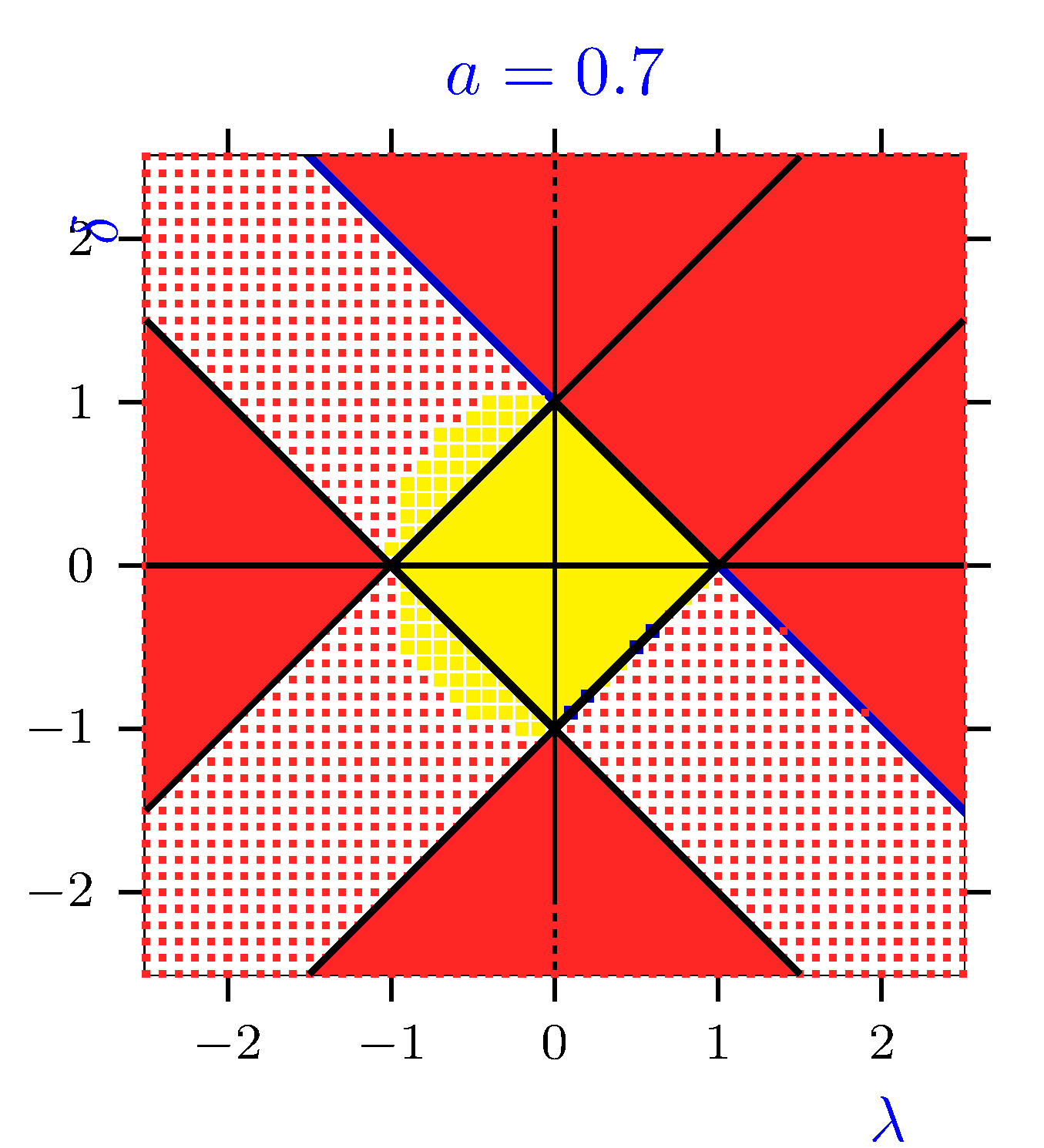}\hfill
    \includegraphics[width=0.48\textwidth]{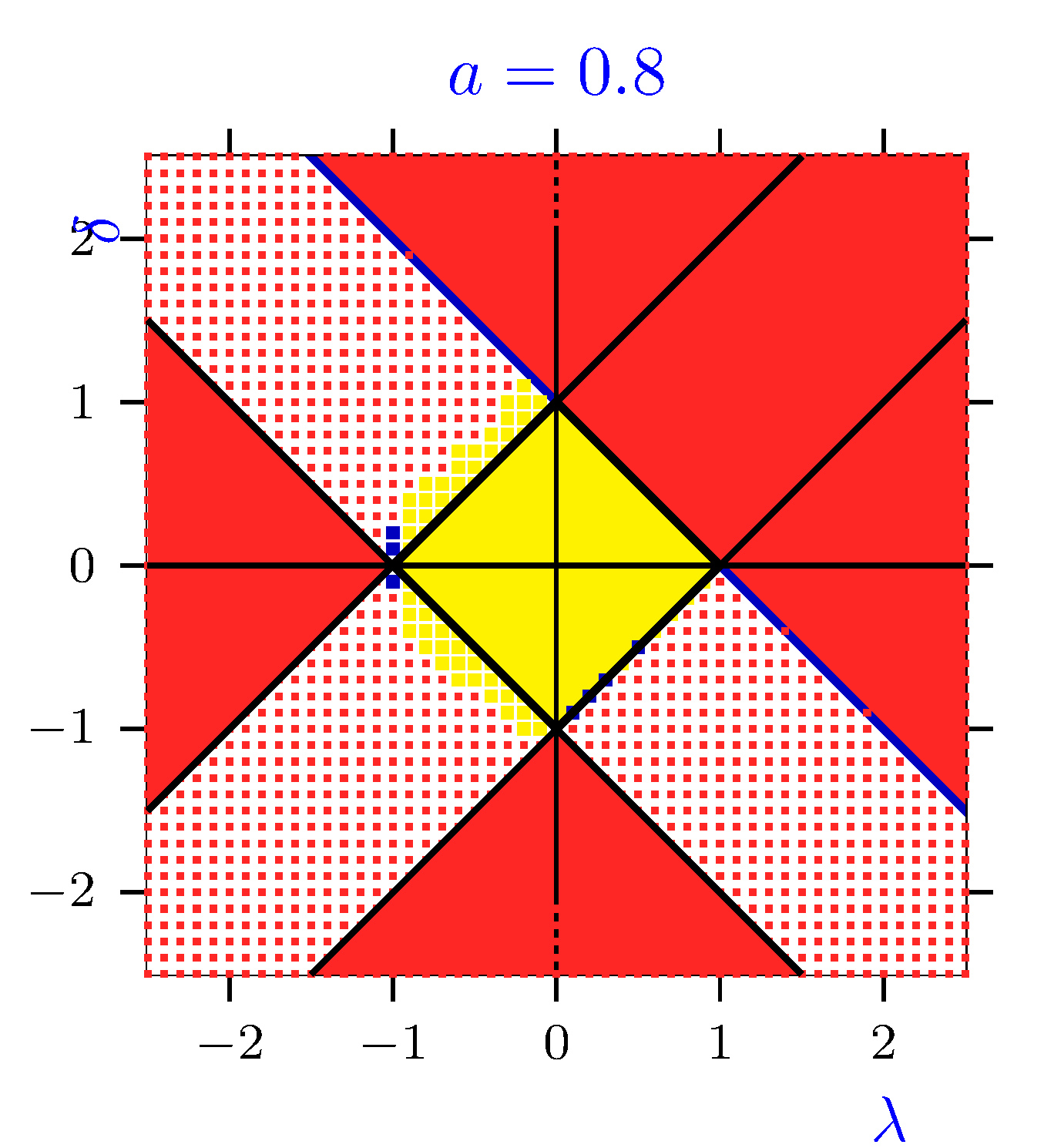}

    \small Figure \ref{fig:linear-regime-diagrams} continued: $a=0.5,0.6,0.7,0.8$, with every panel displayed on $[-2.5,2.5]^2$.
\end{center}

\begin{center}
    \includegraphics[width=0.42\textwidth]{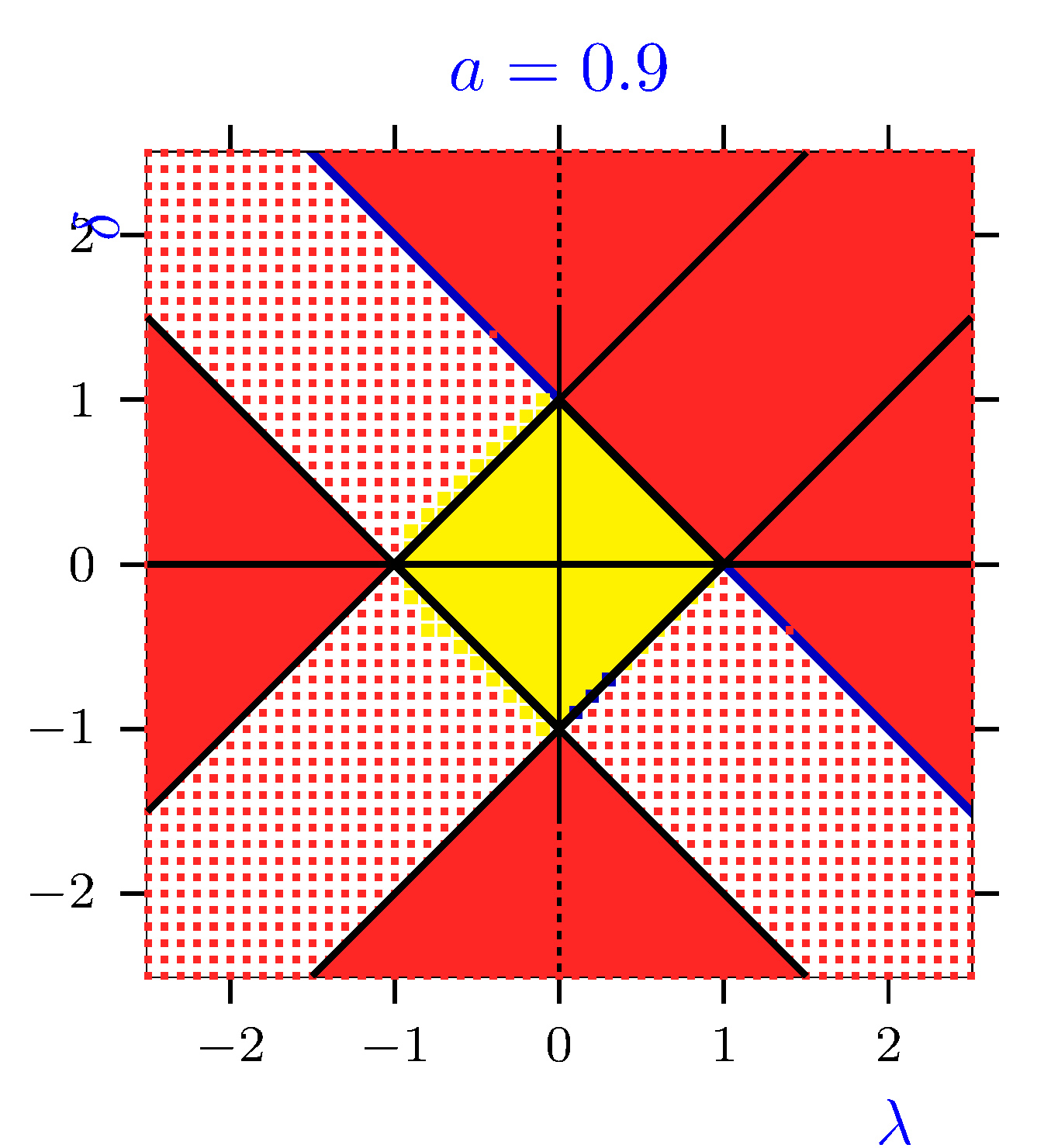}

    \small Figure \ref{fig:linear-regime-diagrams} continued: $a=0.9$, displayed on $[-2.5,2.5]^2$.
\end{center}

The solid diamond in each panel makes the role of Theorem \ref{thm1} explicit. Numerically decaying yellow points occur outside $|\lambda|+|\delta|<1$ for several values of $a$, showing that the theorem gives a sufficient, but not necessary, stability condition for proportional delays.
A comparison of the panels also reveals a clear numerical trend within the delay-dependent region: as $a$ increases from $0$ toward $1$, its stable portion shrinks while the unstable portion expands. Thus, within the parameter ranges and time horizons examined here, increasing the proportional-delay parameter $a$ tends to destabilize the system. 

Figure \ref{fig:concluding-stability-region} summarizes the resulting stability classification for $h(n)=\lfloor an\rfloor$, $0<a<1$, in the $(\lambda,\delta)$-plane. The yellow central region represents the delay-independent stable region, where solutions converge to zero. The red region represents parameter values for which instability is established independently of the particular value of $a$. The white regions are delay-dependent: stability or instability there cannot be decided from $(\lambda,\delta)$ alone and may change with $a$. Consequently, a white region should not be interpreted as stable or bounded; it identifies the part of the parameter plane requiring delay-specific analysis or numerical diagnostics such as those used in Figure \ref{fig:linear-regime-diagrams}.
For example, if we set $\lambda=-0.9$ and $\delta=0.3$ in the white region then the numerical experiments show that the trajectories grow when $a$ is close to 1 but may decay for smaller values of $a$. Figure \ref{fign04} shows numerical decay for $a=0.8$, whereas Figure \ref{fign05} shows growth for $a=0.9$.

\begin{center}
\begin{minipage}[t]{0.47\textwidth}
    \centering
    \includegraphics[width=0.96\linewidth]{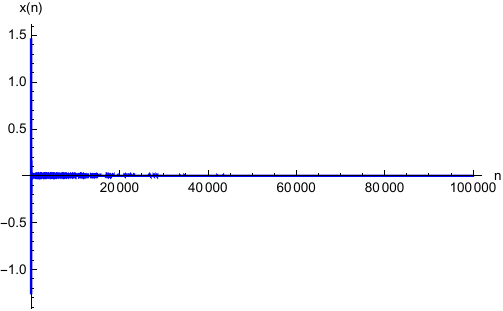}
    \captionof{figure}{Numerically decaying solution of (\ref{pmap}) with $h(n)=\lfloor 0.8 n\rfloor$, $\lambda=-0.9$, and $\delta=0.3$.}
    \label{fign04}
\end{minipage}
\hfill
\begin{minipage}[t]{0.47\textwidth}
    \centering
    \includegraphics[width=0.96\linewidth]{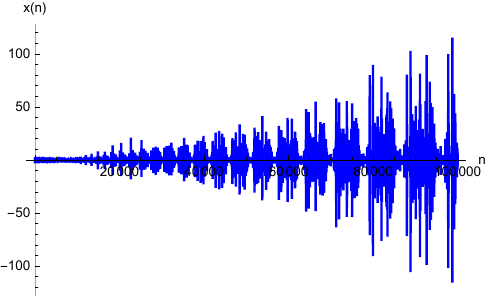}
    \captionof{figure}{Numerically growing solution of (\ref{pmap}) with $h(n)=\lfloor 0.9 n\rfloor$, $\lambda=-0.9$, and $\delta=0.3$.}
    \label{fign05}
\end{minipage}
\end{center}

\begin{figure}[!htbp]
    \centering
    \includegraphics[width=0.74\textwidth]{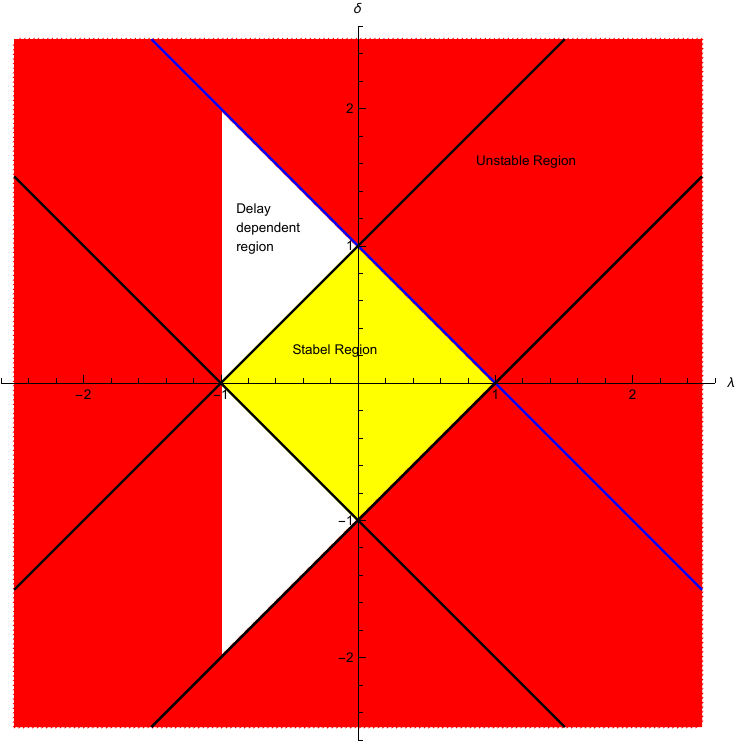}
    \captionof{figure}{Concluding stability diagram for (\ref{pmap}) with $h(n)=\lfloor an\rfloor$, $0<a<1$.}
    \label{fig:concluding-stability-region}
\end{figure}

\endgroup
\subsection{Nonlinear delayed maps}
\begingroup
We now consider the nonlinear delayed map
\begin{equation}
 x_{n+1} = (1-\epsilon)f(x_{n}) + \epsilon f(x_{h(n)}), \qquad n\geq 0. \label{nleq}
\end{equation}
Let $x^*$ be a fixed point of $f$, i.e., $x^*=f(x^*)$. Linearizing (\ref{nleq}) about $x^*$ gives
\begin{equation}
u_{n+1} = (1-\epsilon)f'(x^*)u_{n} + \epsilon f'(x^*)u_{h(n)}, \label{loclin}
\end{equation}
where $u_n=x_n-x^*$.

Comparing (\ref{loclin}) with equation (\ref{pmap}), we obtain $\lambda=(1-\epsilon)f'(x^*)$ and $\delta=\epsilon f'(x^*)$. Therefore, Theorem \ref{thm1} implies, for $h(n)\to\infty$, that the equilibrium point $x^*$ of (\ref{nleq}) is locally asymptotically stable if
\begin{equation}
|(1-\epsilon)f'(x^*)| + |\epsilon f'(x^*)| < 1. \label{stc}
\end{equation}
Since $0<\epsilon<1$, condition (\ref{stc}) simplifies to
\begin{equation}
|f'(x^*)| < 1. \label{sstc}
\end{equation}
Thus, (\ref{sstc}) guarantees local asymptotic stability of $x^*$ independently of the particular diverging delay function and of the coupling parameter $\epsilon$.

\subsection{Example: delayed logistic map}
Consider (\ref{nleq}) with $f(x)=\mu x(1-x)$, the logistic map with delay. The equilibrium points are $x_1^*=0$ and $x_2^*=(\mu-1)/\mu$. Moreover, $f'(x_1^*)=\mu$ and $f'(x_2^*)=2-\mu$.

\subsubsection*{Sufficient conditions for stability}
Using (\ref{sstc}), we obtain the following delay-independent sufficient conditions for diverging delays.
\begin{itemize}
\item If $-1 < \mu < 1$, then $x_1^* = 0$ is locally asymptotically stable (LAS).
\item If $1 < \mu < 3$, then $x_2^* = \frac{\mu-1}{\mu}$ is LAS.
\end{itemize}

\subsection*{Remarks}

\begin{itemize}
\item These are sufficient conditions only; stability may depend on the delay $h$ and coupling strength $\epsilon$ when they are not satisfied.
\item In the stable region, convergence is faster when $\epsilon$ is close to $0$.
\item For $h(n)=\lfloor an\rfloor$, $0<a<1$, convergence in the stable region is faster when $a$ is close to 1.
\end{itemize}

\endgroup

\section{Model}\label{sec3}
The analytical results of the previous section establish general stability properties for pantograph difference equations. We now turn to specific nonlinear systems to investigate how proportional delays influence global dynamics beyond local stability analysis. In particular, we introduce pantograph versions of the logistic and tent maps and study their transient behavior, multistability, and chaotic evolution.

Consider the evolution rule $x_{i+1}=(1-\epsilon)f(x_i) +\epsilon f(x_{h(i)})$, as in (\ref{nleq}). We consider two delay functions, $h(i)=\lfloor ai \rfloor$ and $h(i)=\lfloor ai^{\alpha}\rfloor$, with
$0<a<1$ and $0<\alpha<1$. This assignment ensures that $h(i)\le i$. We consider
$0\le \epsilon \le 1$. If an interval $I$ satisfies $f(I)\subseteq I$, then the evolution stays in $I$ whenever $x_0\in I$, because each update is a convex combination of two values in $I$. In particular, if $f:[0,1] \rightarrow [0,1]$, the evolution stays in the unit interval if $x_0\in [0,1]$. For $\epsilon=1$, it is a homogeneous pantograph difference equation. For $\epsilon=0$, we
recover the original difference equation. For $0<\epsilon<1$, it is an inhomogeneous pantograph 
difference equation.

For $\epsilon=1$, the delayed trajectory follows the same successive iterates of $f$ as the undelayed map, but repeats each value over a long interval. Here $f^m$ denotes the $m$-fold composition of $f$. For $h(i)=\lfloor i/4\rfloor$, the stated recurrence gives
\[
\begin{aligned}
x_1=\cdots=x_4&=f(x_0),\\
x_5=\cdots=x_{20}&=f^2(x_0),\\
x_{21}=\cdots=x_{84}&=f^3(x_0).
\end{aligned}
\]
More generally, define $T_m=(4^m-1)/3$ for $m\geq0$. Then
\[
x_n=f^m(x_0),\qquad T_m\leq n<T_{m+1},
\]
and the interval corresponding to the $m$th iterate has length $T_{m+1}-T_m=4^m$. These endpoints follow from $T_{m+1}=4T_m+1$; the shift by one is essential because the evolution rule specifies $x_{i+1}$, not $x_i$. Consequently, an asymptotically stable fixed point of the undelayed map remains asymptotically stable in this homogeneous case, although convergence may be unobservable within a practical computation time.

For $f(x)=bx$, the exact solution is
\[
x_n=b^{\lfloor\log_4(3n+1)\rfloor}x_0.
\]
For $b>1$ and $x_0\ne0$, its magnitude has power-law order $n^{\log_4 b}$, with a bounded step-like modulation. Thus, exponential growth in the undelayed map becomes power-law growth in physical iteration time. 
For \(b>1\) and \(x_0\ne0\), the magnitude of the solution has
power-law scaling \(n^{\log_4 b}\), with a bounded step-like
modulation. Thus, exponential growth in the undelayed map becomes
power-law growth in physical iteration time.  The corresponding smooth guiding curves for \(x_0=1\)
are \(\sqrt{3t}\) and \(3t\), respectively. These curves capture the
power-law exponents and prefactors of the smooth approximation obtained
by replacing the floor function by its argument; they are not exact
solutions. The factor \(3\) arises from the exact solution
$
x_n=b^{\lfloor\log_4(3n+1)\rfloor} \sim (3n+1)^{\log_4 b},
$
as smooth approximation.
 If we ignore 1 for large $n$, we get $\sqrt{3t}$ and $3t$ respectively for $b=2$ and $b=4$ (see Figure~\ref{lser}).
For the more general delay $h(i)=\lfloor i/k\rfloor$, where $k\geq2$ is an integer, the corresponding endpoints are $T_m=(k^m-1)/(k-1)$, and the interval lengths are $k^m$.

The logistic map is given by $f(x)=\mu x(1-x)$ with $\mu\in [0,4]$. We assign
$\mu=4$ and choose $h(i)=\lfloor i/4\rfloor$. 
This exponential increase in periods
where the variable remains unchanged till it moves to the next value is
shown in Fig.~\ref{tserl}. More generally, if an undelayed orbit reaches a prescribed accuracy after 100 iterations in a convergent parameter regime, the homogeneous delayed orbit reaches the same iterate only at $T_{100}=(4^{100}-1)/3$. This convergence example does not describe the generic logistic orbit at $\mu=4$. The homogeneous delayed orbit and the corresponding undelayed orbit have the same set of accumulation points, but their time scales and time averages need not agree.

\begin{figure}
	\includegraphics[width=\linewidth]{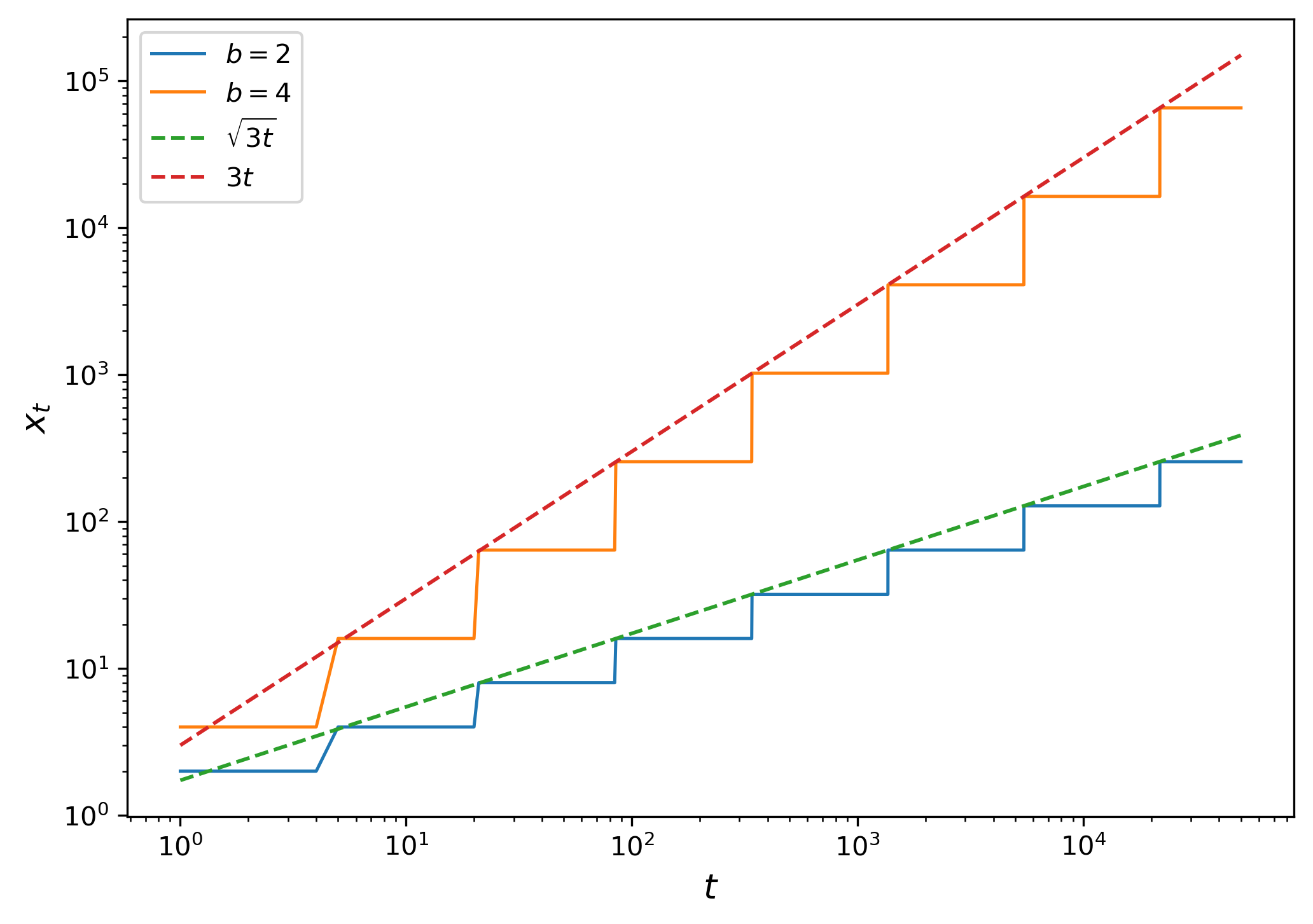}
	\caption{Time series of the homogeneous linear pantograph map with
\(x_0=1\), \(h(i)=\lfloor i/4\rfloor\), and \(\epsilon=1\), plotted on
logarithmic scales on both axes for \(f(x)=bx\), with \(b=2\) and \(b=4\).
The guiding curves \(\sqrt{3t}\) and \(3t\) illustrate the power-law
exponents \(\log_4 2=1/2\) and \(\log_4 4=1\), respectively; they are
not exact solutions. The periods of stasis grow exponentially with the
composition index.}
	\label{lser}
\end{figure}

\begin{figure}
	\includegraphics[width=\linewidth]{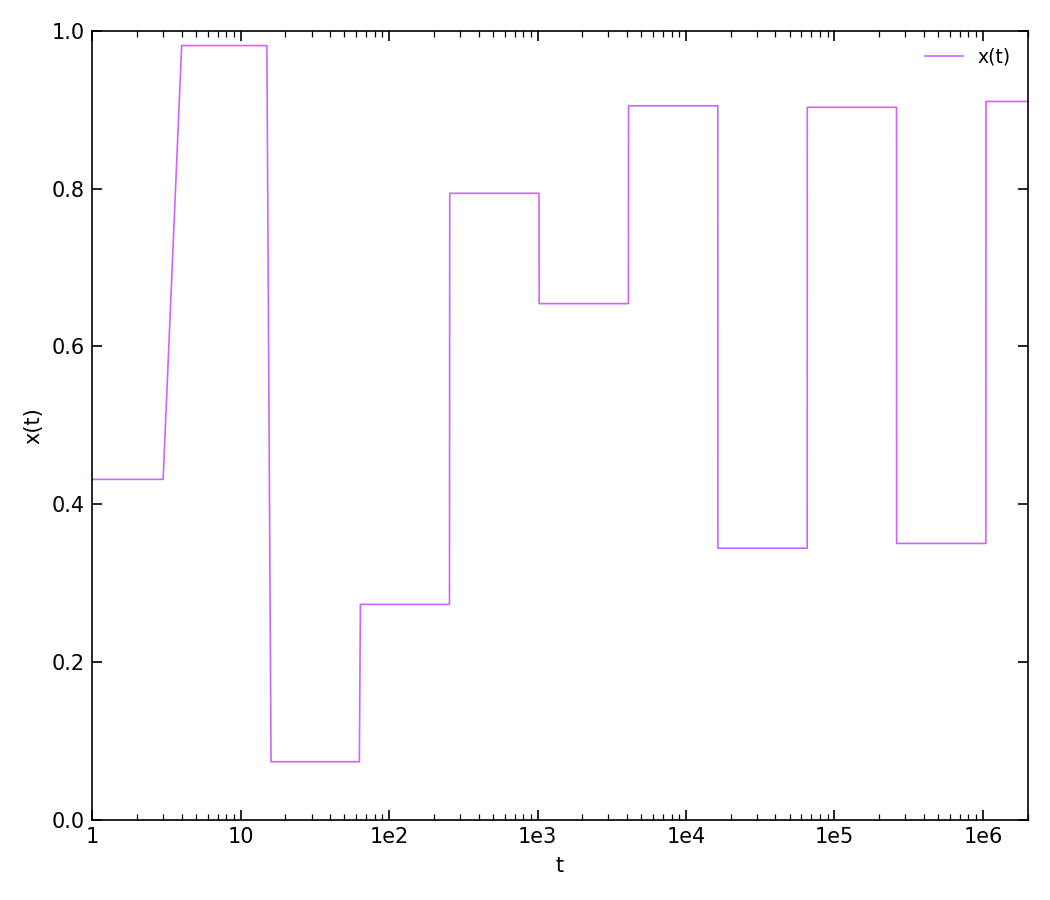}
	\caption{Evolution of the homogeneous pantograph logistic map with $\mu=4$, $x_0=0.123$, $h(i)=\lfloor i/4\rfloor$, and $\epsilon=1$. Long periods of stasis become asymptotically equally spaced on the logarithmic time axis.}
	\label{tserl}
\end{figure}

\begin{figure}
	\includegraphics[width=\linewidth]{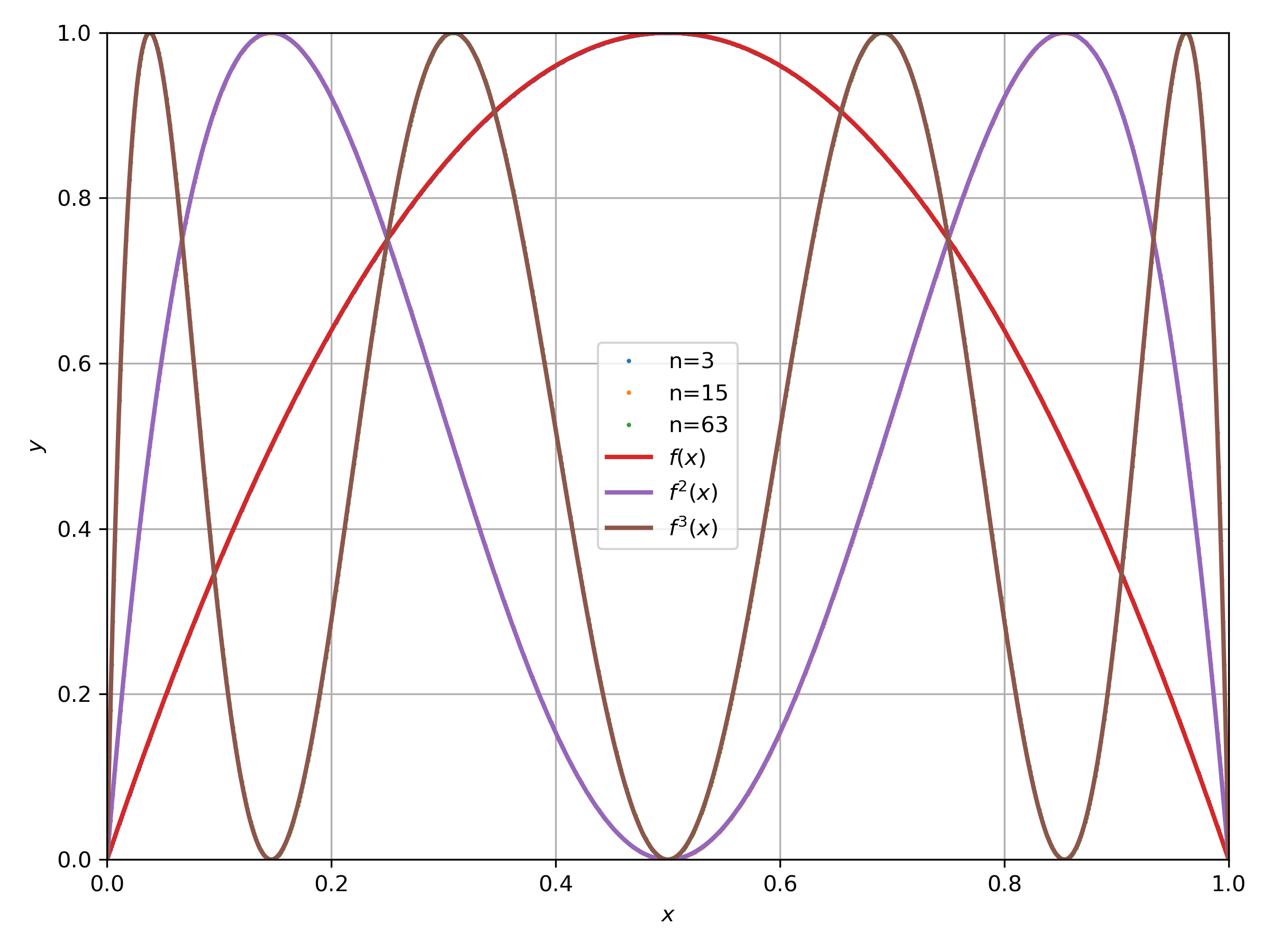}
	\caption{Dependence of $x_3$, $x_{15}$, and $x_{63}$ on $x_0$ for the homogeneous logistic map with $\mu=4$, $h(i)=\lfloor i/4\rfloor$, and $\epsilon=1$. These values agree with $f(x_0)$, $f^2(x_0)$, and $f^3(x_0)$, respectively.}
	\label{match}
\end{figure}

If we plot $x(3)$ as a function of $x(0)$, it is simply $f(x(0))$. Similarly,
$x(15)=f^2(x(0))$ and $x(63)=f^3(x(0))$. This is what is numerically observed 
and is shown in Fig. \ref{match}.

For a sublinear delay, the stasis periods grow even faster, as illustrated in Fig.~\ref{tserloghom}. In particular, for $h(i)=\lfloor\sqrt{i}/2\rfloor$ and $\epsilon=1$, the plateau endpoints satisfy
\[
T_1=1,\qquad T_{m+1}=4T_m^2+1,
\]
so that $T_2=5$, $T_3=101$, $T_4=40805$, and $T_5=6660192101$. As before, $x_n=f^m(x_0)$ for $T_m\leq n<T_{m+1}$. The plateau lengths grow faster than exponentially in $m$. In particular, $x_{10^8}=f^4(x_0)$, as illustrated for the logistic map in Fig.~\ref{neplog}. The delay coefficient matters: for $h(i)=\lfloor\sqrt{i}/4\rfloor$, the endpoint recurrence is instead $T_{m+1}=16T_m^2+1$, giving $T_2=17$, $T_3=4625$, and $T_4=342250001$, so that $x_{10^8}=f^3(x_0)$. In either case, the homogeneous orbit retains the accumulation points of the undelayed orbit, with a much slower progression through its iterates.

\begin{figure}
	\includegraphics[width=\linewidth]{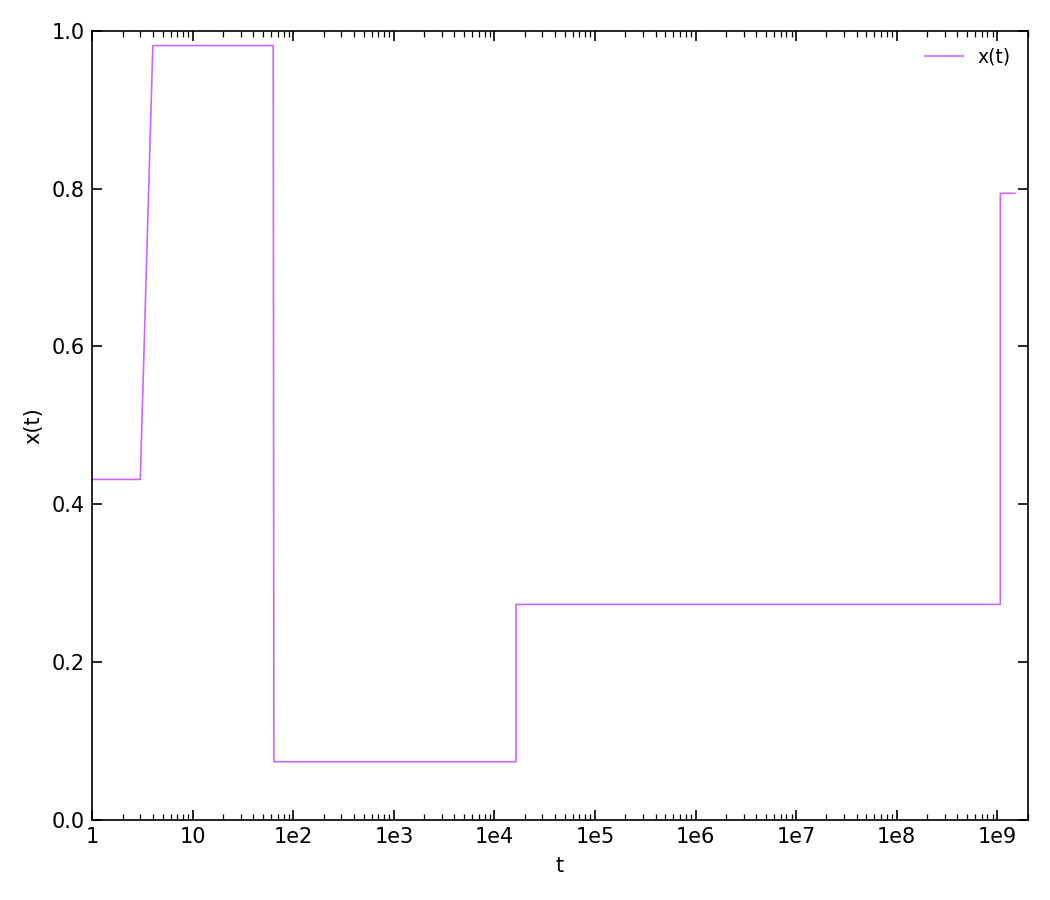}
	\caption{Evolution of the homogeneous pantograph logistic map with $\mu=4$, $x_0=0.123$, and $\epsilon=1$ under a sublinear proportional delay. The long periods of stasis are not equally spaced even on the logarithmic time axis.}
	\label{tserloghom}
\end{figure}

\begin{figure}
	\includegraphics[width=\linewidth]{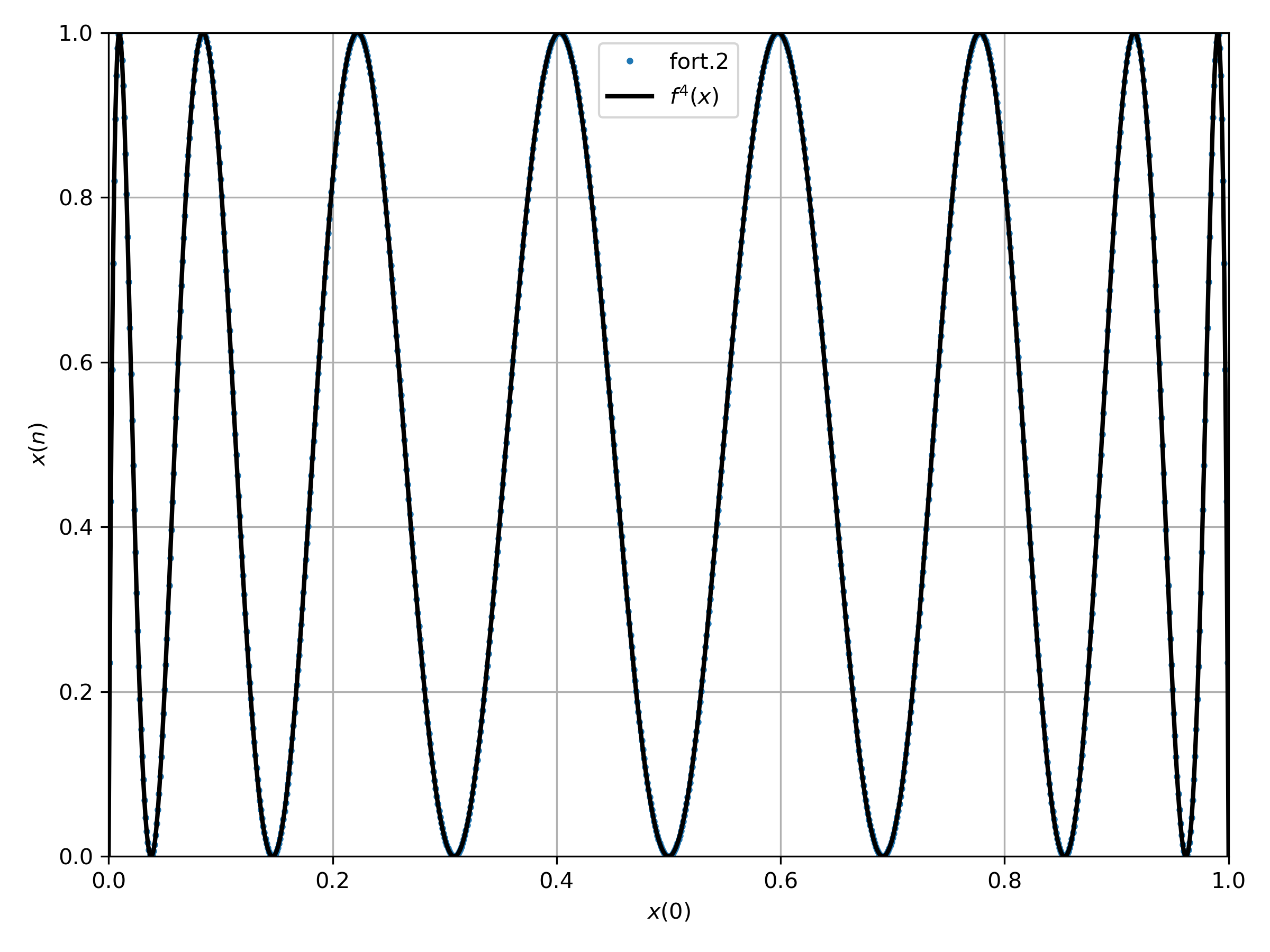}
	\caption{Finite-time dependence on the initial condition for the homogeneous pantograph logistic map with $\mu=4$, $h(i)=\lfloor\sqrt{i}/2\rfloor$, and $\epsilon=1$. The sampled state after $10^8$ iterations is plotted against $x_0$ and agrees with $f^4(x_0)$.}
	\label{neplog}
\end{figure}

We next consider $0<\epsilon<1$ for the logistic map with $\mu=4$ and $h(i)=\lfloor i/4\rfloor$. Figure~\ref{bif-log} shows finite-time bifurcation diagrams as functions of $\epsilon$ for $x_0=0.845$, $0.056$, and $0.347$, using 100 successive states after $10^5$ iterations. The diagrams differ near the homogeneous limit $\epsilon=1$. At smaller values of $\epsilon$, the sampled ranges largely overlap and show no apparent finite-time multistability for these initial conditions.

Figures~\ref{ep.3} and~\ref{ep.9} show the dependence on $x_0$ at $\epsilon=0.3$ and $0.9$, respectively, using the same stated sampling window. At $\epsilon=0.3$, the sampled range is approximately independent of the initial condition, whereas at $\epsilon=0.9$ it depends strongly on $x_0$. The symmetry about $x_0=0.5$ follows from $f(x)=f(1-x)$: complementary initial conditions produce identical delayed trajectories from the first update onward. The observed dependence near $\epsilon=1$ is evidence of finite-time multistability, not proof of distinct asymptotic attractors. As in the homogeneous case, long transients may account for this behavior, although the exact composition formula no longer applies when $0<\epsilon<1$.

\begin{figure}
	\includegraphics[width=\linewidth]{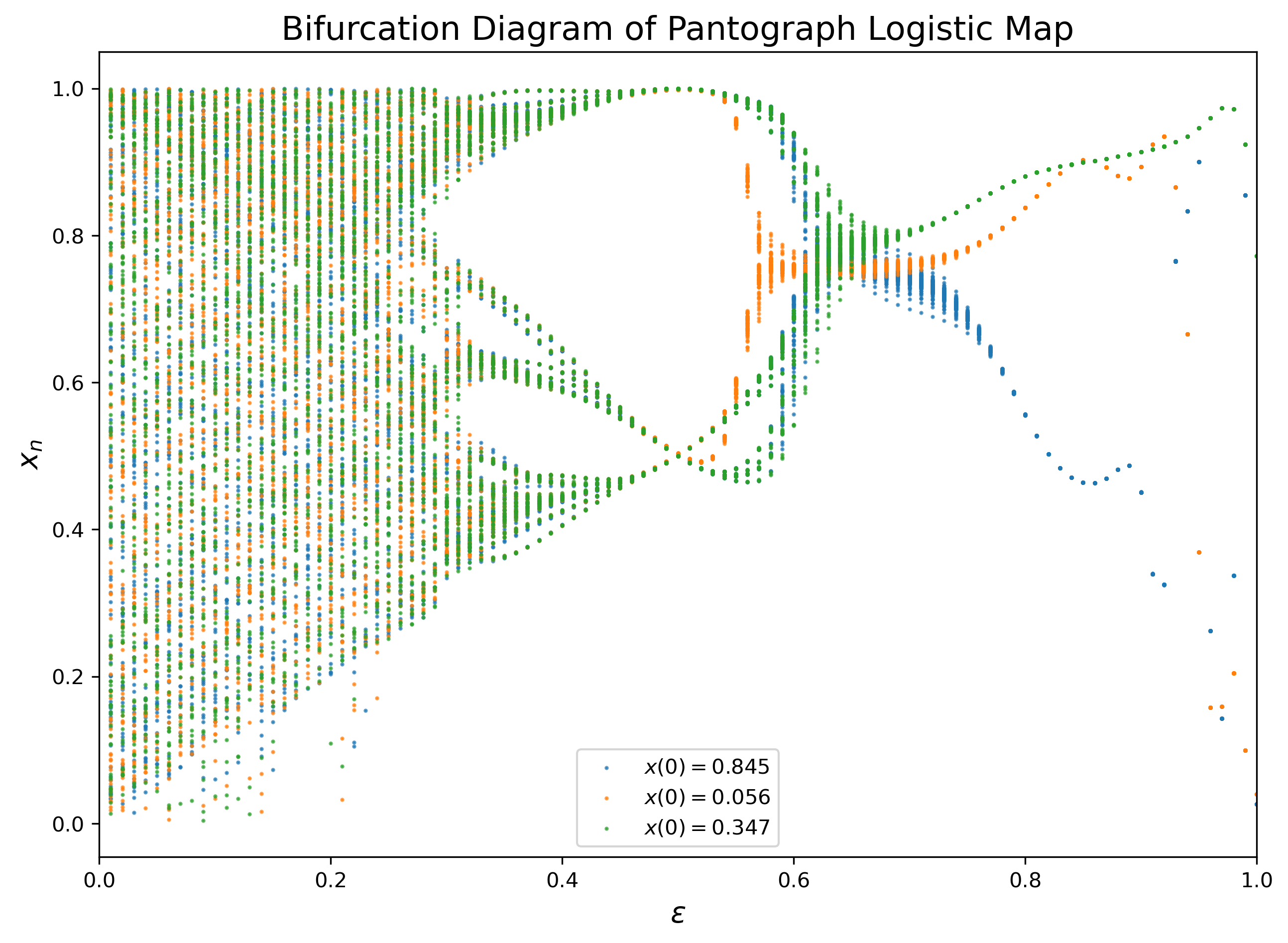}
	\caption{Finite-time bifurcation diagram of the pantograph logistic map with $\mu=4$ and $h(i)=\lfloor i/4\rfloor$ as a function of $\epsilon$. After $10^5$ iterations, 100 successive states are plotted for $x_0=0.845$, $0.056$, and $0.347$.}
	\label{bif-log}
\end{figure}

\begin{figure}
	\includegraphics[width=\linewidth]{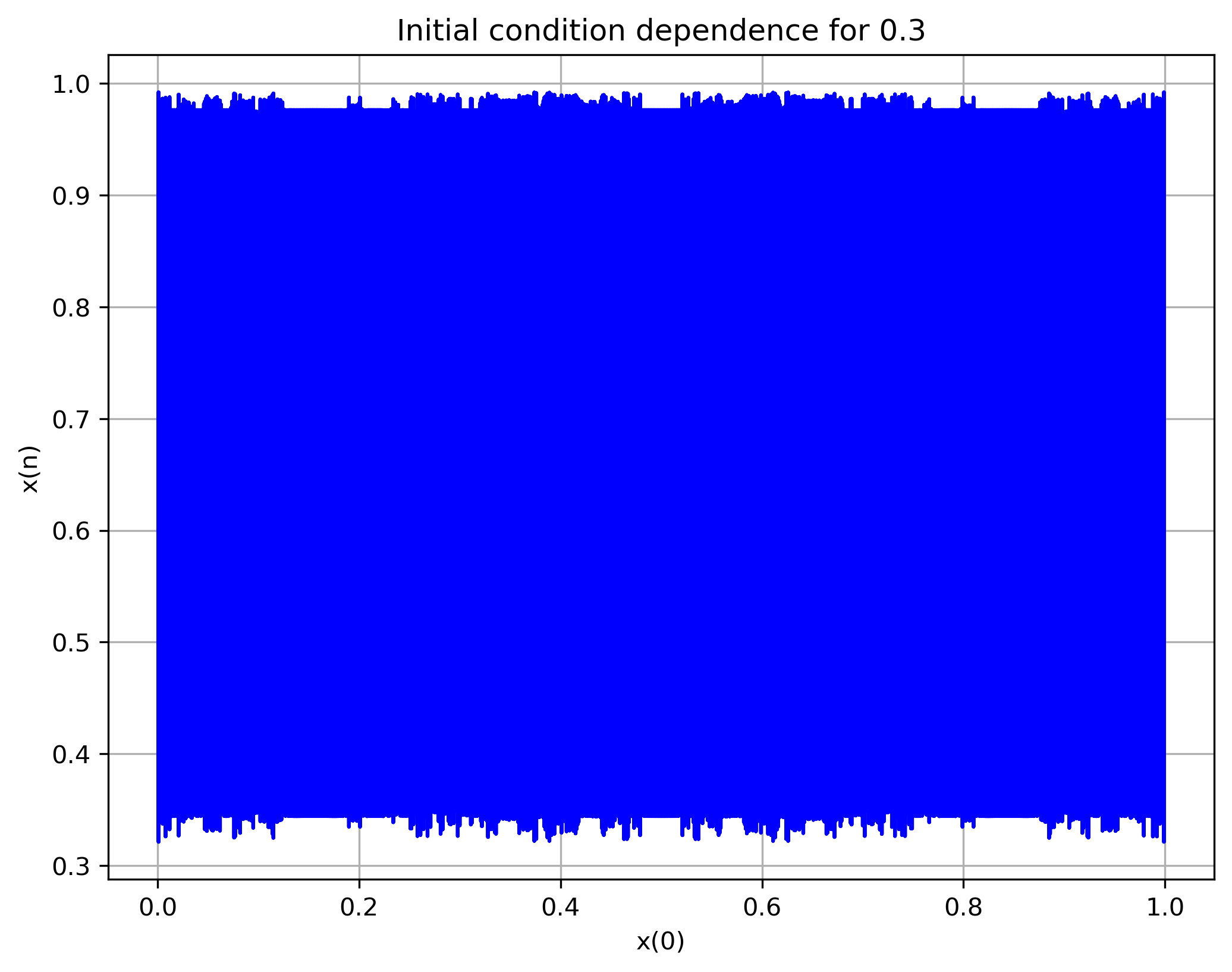}
	\caption{Finite-time dependence on $x_0$ for the pantograph logistic map with $\mu=4$, $h(i)=\lfloor i/4\rfloor$, and $\epsilon=0.3$. After $10^5$ iterations, 100 successive states are plotted against $x_0$.}
	\label{ep.3}
\end{figure}
\begin{figure}
	\includegraphics[width=\linewidth]{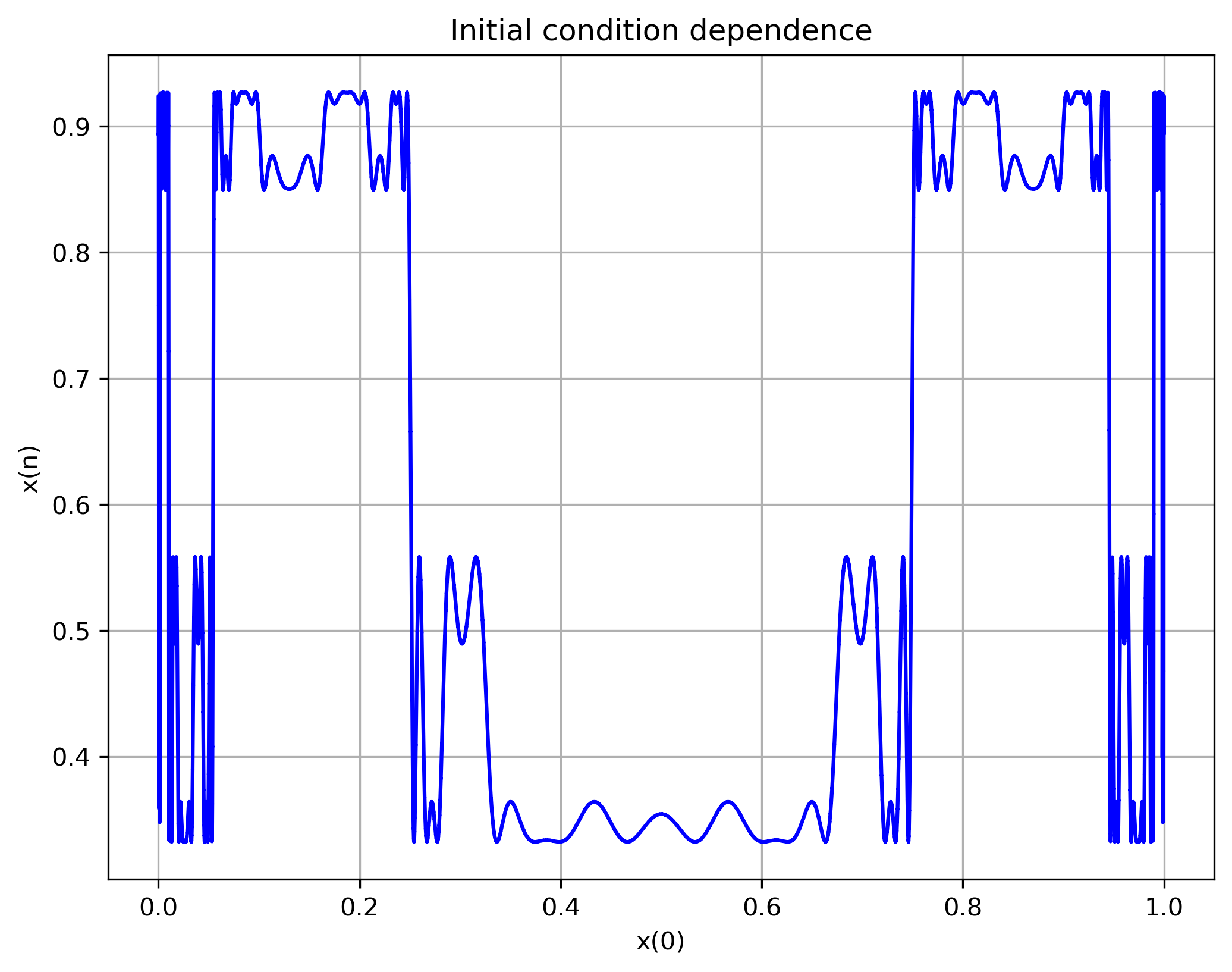}
	\caption{Finite-time dependence on $x_0$ for the pantograph logistic map with $\mu=4$, $h(i)=\lfloor i/4\rfloor$, and $\epsilon=0.9$. After $10^5$ iterations, 100 successive states are plotted against $x_0$.}
	\label{ep.9}
\end{figure}

For the sublinear delay $h(i)=\lfloor\sqrt{i}/2\rfloor$, Fig.~\ref{nlin} shows the logistic-map bifurcation diagrams for $x_0=0.088$, $0.38$, and $0.95$. The sampled states again depend strongly on the initial condition near $\epsilon=1$. At $\epsilon=1$, Fig.~\ref{nep1} shows the dependence on $x_0$ after $10^5$ iterations. Since this observation window lies between $T_4=40805$ and $T_5=6660192101$, the plotted values equal $f^4(x_0)$.

\begin{figure}
	\includegraphics[width=\linewidth]{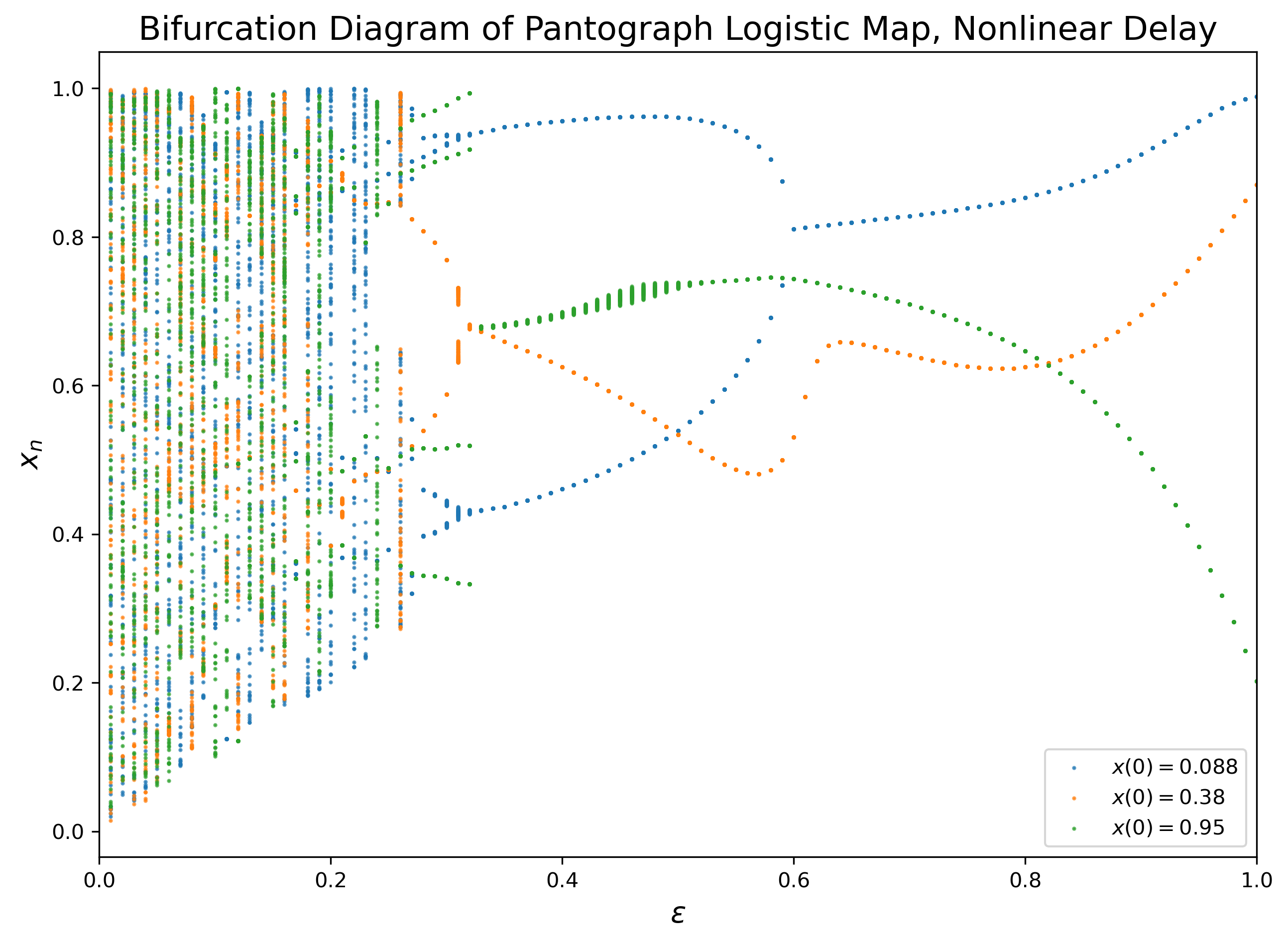}
	\caption{Finite-time bifurcation diagram of the pantograph logistic map with $\mu=4$ and $h(i)=\lfloor\sqrt{i}/2\rfloor$ as a function of $\epsilon$, for $x_0=0.088$, $0.38$, and $0.95$. After $10^5$ iterations, 100 successive states are plotted for each value of $\epsilon$.}
	\label{nlin}
\end{figure}
\begin{figure}
	\includegraphics[width=\linewidth]{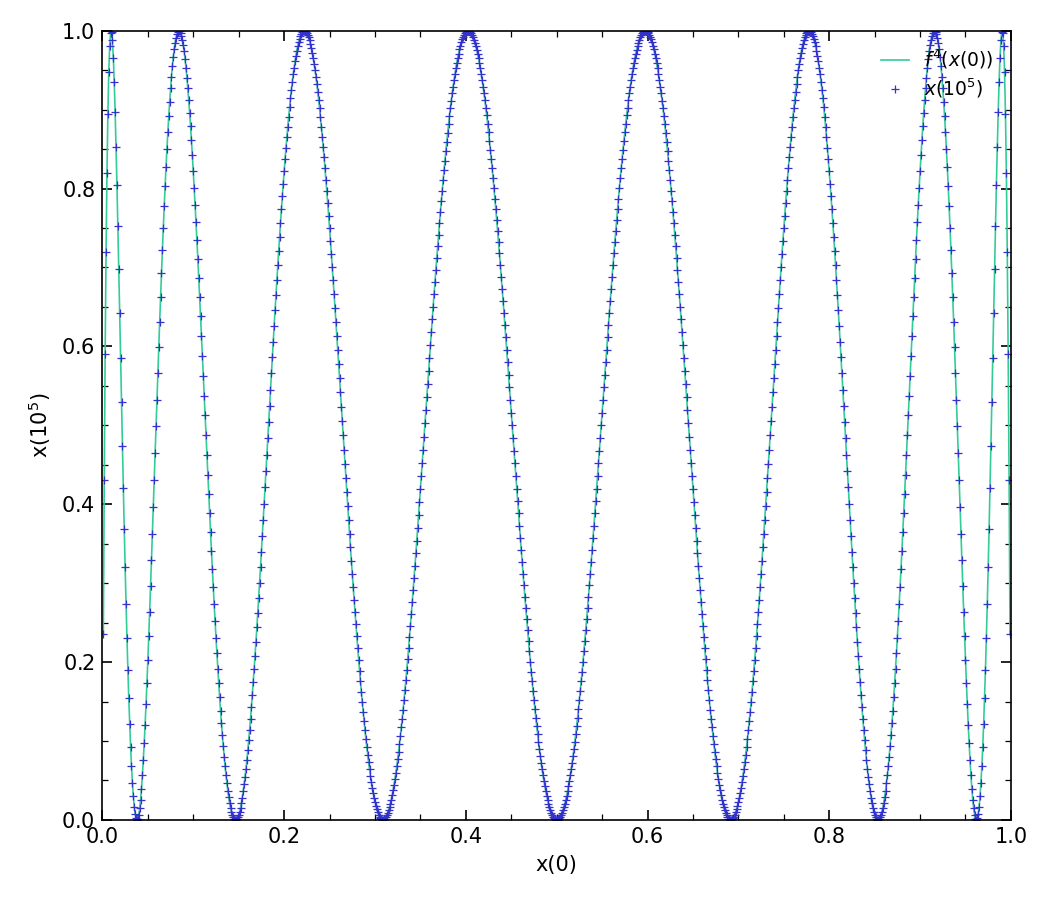}
	\caption{Finite-time dependence on $x_0$ for the homogeneous pantograph logistic map with $\mu=4$, $h(i)=\lfloor\sqrt{i}/2\rfloor$, and $\epsilon=1$. The 100 successive states sampled after $10^5$ iterations all equal $f^4(x_0)$.}
	\label{nep1}
\end{figure}

We now consider the pantograph version of the symmetric tent map,
\[
f(x)=\begin{cases}
cx, & 0\leq x\leq 1/2,\\
c(1-x), & 1/2<x\leq1,
\end{cases}
\qquad c=2,
\]
which maps $[0,1]$ onto itself. Figure~\ref{tenth} shows the bifurcation diagrams as functions of $\epsilon$ for $x_0=0.196$, $0.49$, and $0.94$, with $h(i)=\lfloor i/4\rfloor$ in the upper panel and $h(i)=\lfloor\sqrt{i}/2\rfloor$ in the lower panel. The sampled trajectories show initial-condition dependence for larger coupling strengths, consistent with finite-time multistability; the plots alone do not establish an exact threshold or asymptotic coexistence of attractors.

For $\epsilon=1$ and $h(i)=\lfloor i/4\rfloor$, Fig.~\ref{bif-tent} plots $x_{50}$ and $x_{100}$ against $x_0$. These finite-time initial-condition maps equal $f^3(x_0)$ and $f^4(x_0)$, respectively. At $n=10^5$, however, the state is $f^9(x_0)$, since $T_9=87381\leq10^5<T_{10}=349525$.

\begin{figure}
	\centering
	\includegraphics[width=0.9\linewidth]{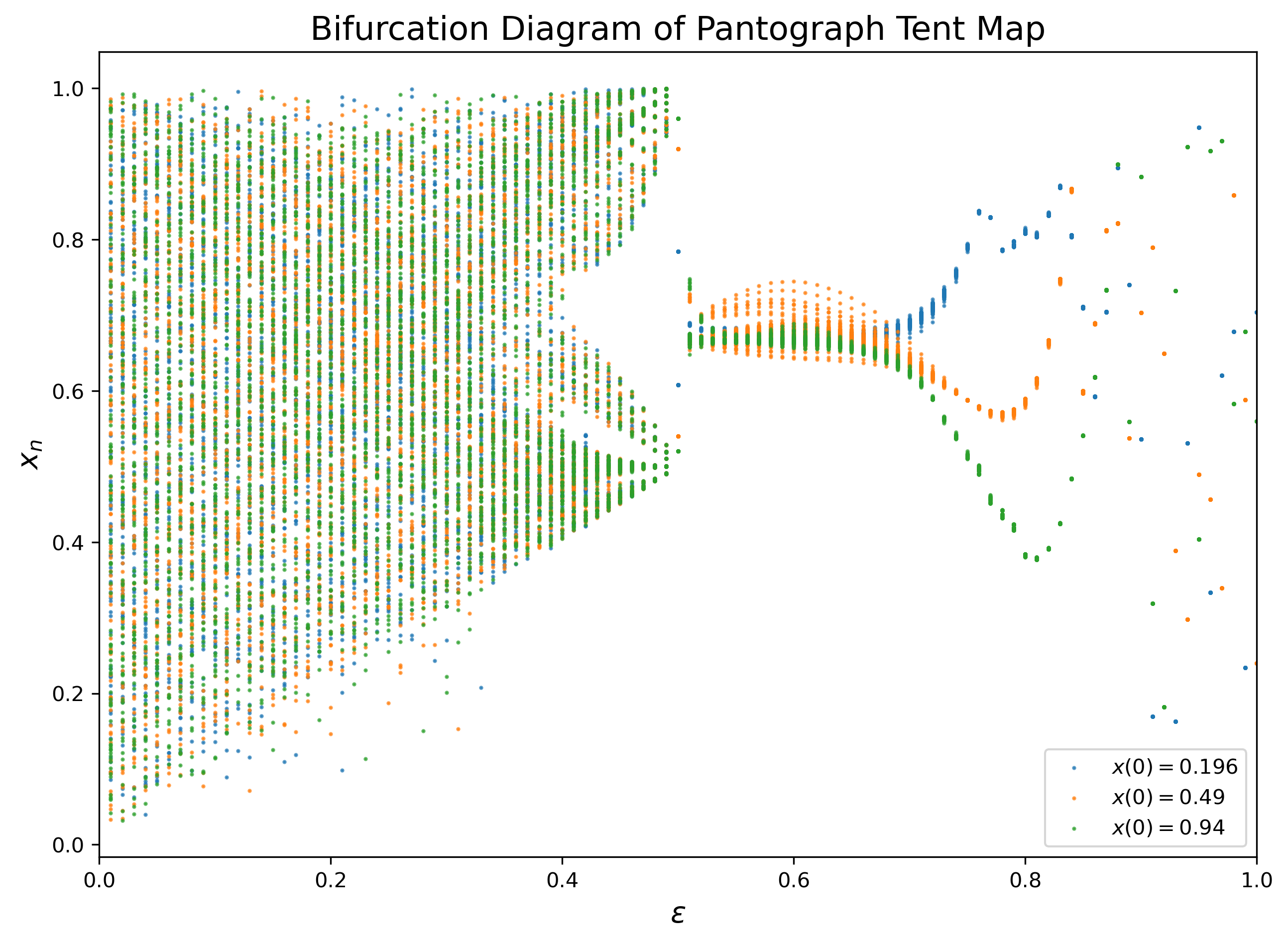}
	\includegraphics[width=0.9\linewidth]{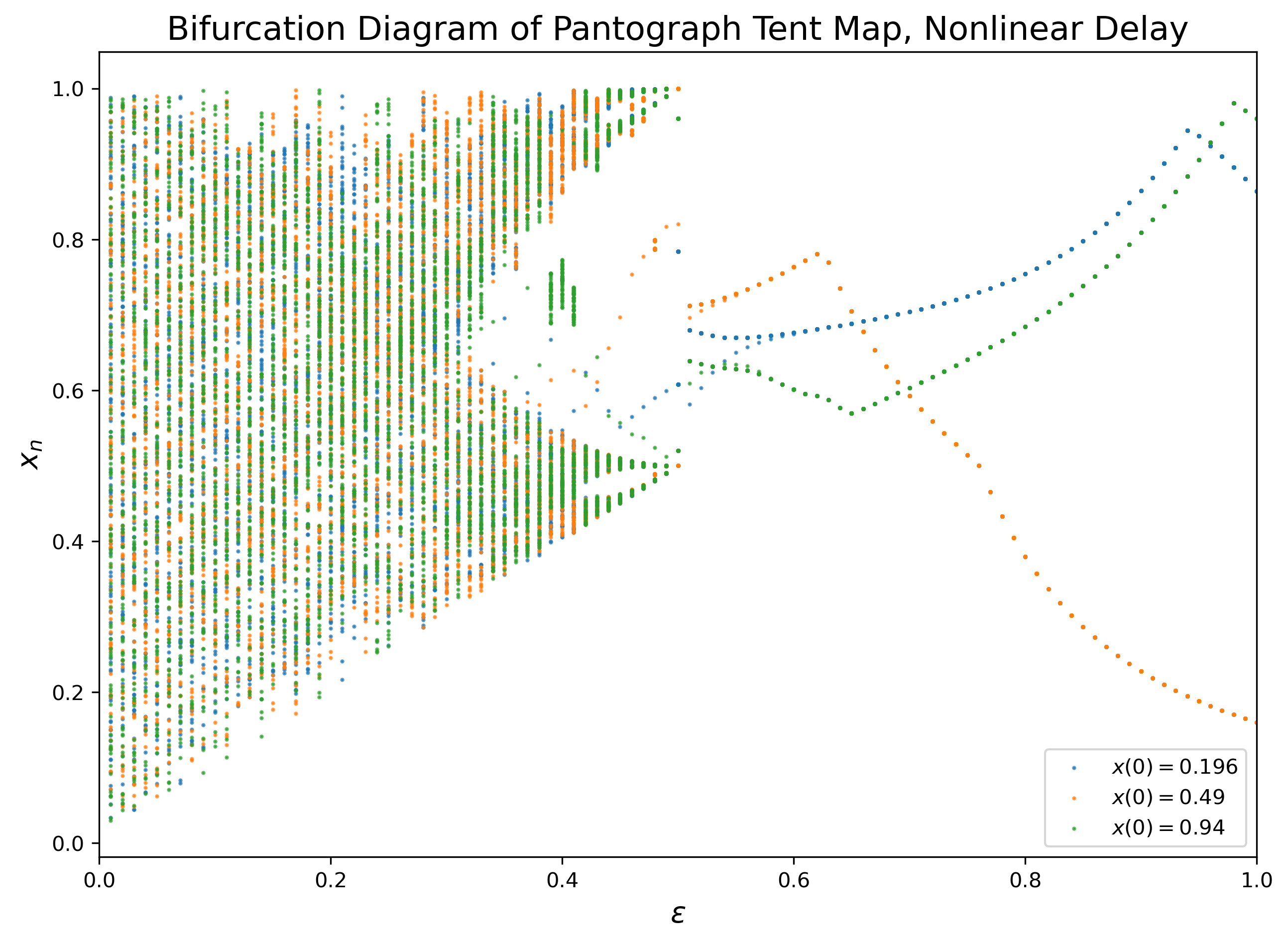}
	\caption{Finite-time bifurcation diagrams of the pantograph tent map with $c=2$ as functions of $\epsilon$, for $x_0=0.196$, $0.49$, and $0.94$. Upper panel: proportional delay $h(i)=\lfloor i/4\rfloor$. Lower panel: sublinear delay $h(i)=\lfloor\sqrt{i}/2\rfloor$.}
	\label{tenth}
\end{figure}

\begin{figure}
	\includegraphics[width=\linewidth]{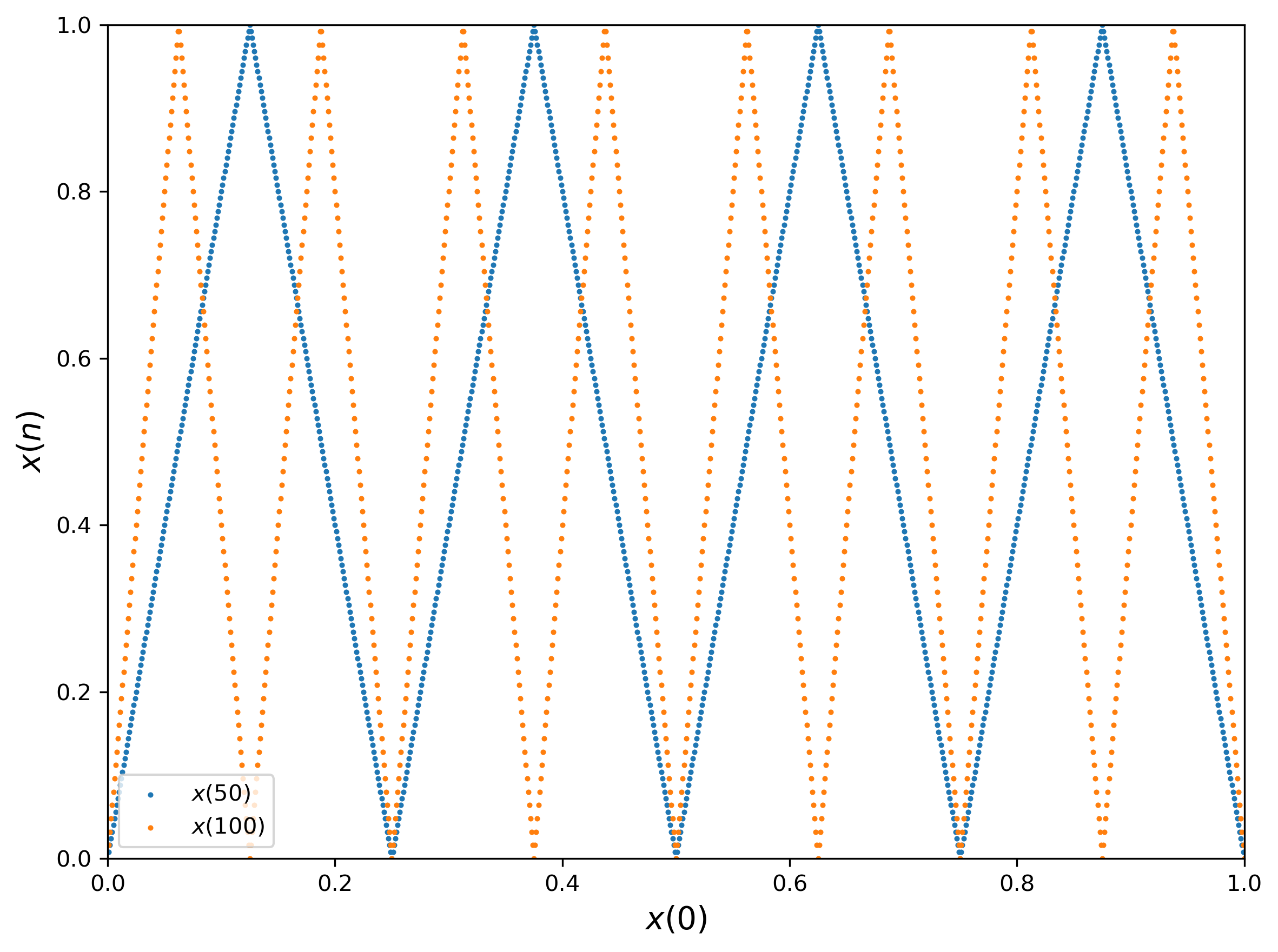}
	\caption{Finite-time initial-condition maps $x_0\mapsto x_n$ for the homogeneous pantograph tent map with $c=2$, $h(i)=\lfloor i/4\rfloor$, and $\epsilon=1$, at $n=50$ and $n=100$. These are $f^3(x_0)$ and $f^4(x_0)$, respectively, rather than consecutive-state return maps.}
	\label{bif-tent}
\end{figure}

For the sublinear delay $h(i)=\lfloor\sqrt{i}/2\rfloor$ and $\epsilon=1$, Fig.~\ref{ntent} plots $x_{10^3}=f^3(x_0)$ and $x_{10^7}=f^4(x_0)$ against $x_0$. Figure~\ref{nhtent} shows that the state sampled after $10^8$ iterations still equals $f^4(x_0)$, in agreement with the plateau endpoints derived above.
\begin{figure}
	\includegraphics[width=\linewidth]{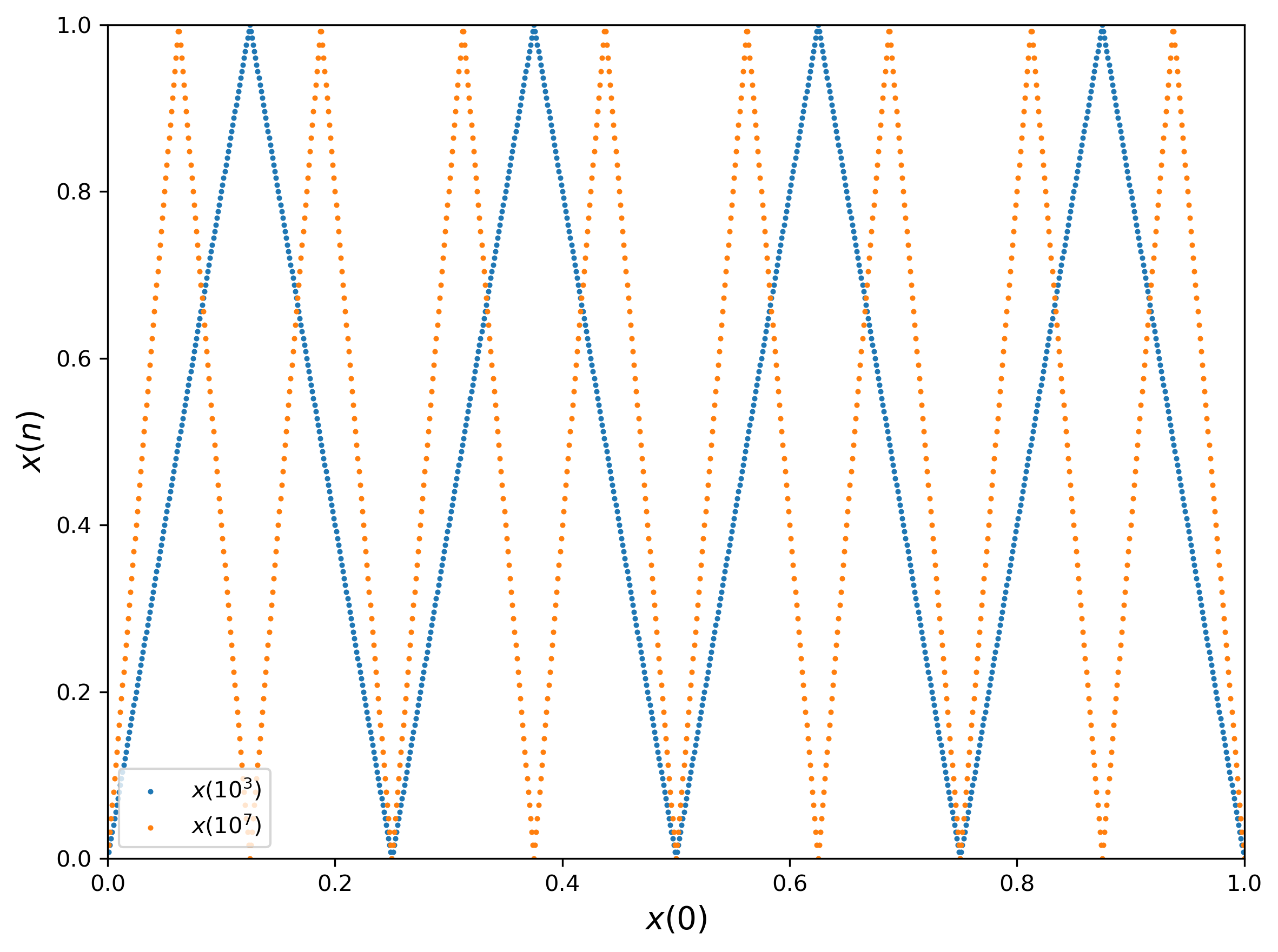}
	\caption{Finite-time initial-condition maps $x_0\mapsto x_n$ for the homogeneous pantograph tent map with $c=2$, $h(i)=\lfloor\sqrt{i}/2\rfloor$, and $\epsilon=1$, at $n=10^3$ and $n=10^7$. These equal $f^3(x_0)$ and $f^4(x_0)$, respectively.}
	\label{ntent}
\end{figure}

\begin{figure}
	\includegraphics[width=\linewidth]{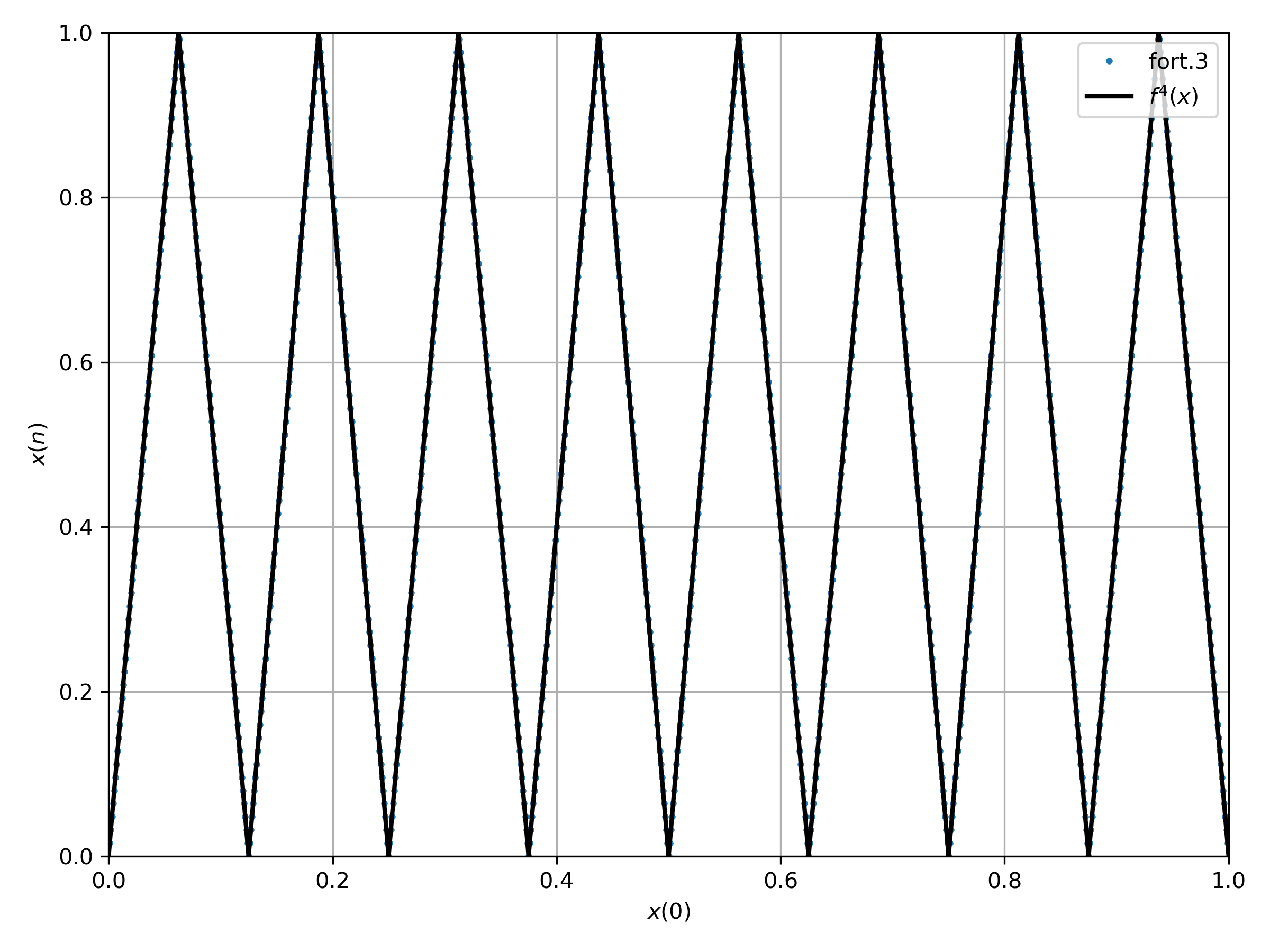}
	\caption{Finite-time dependence on $x_0$ for the homogeneous pantograph tent map with $c=2$, $h(i)=\lfloor\sqrt{i}/2\rfloor$, and $\epsilon=1$. The state sampled after $10^8$ iterations is compared with $f^4(x_0)$.}
	\label{nhtent}
\end{figure}

For smaller values of $\epsilon$, the diagrams show no apparent finite-time multistability for the sampled initial conditions. To investigate sensitivity to perturbations, the system is first evolved for $N=10^5$ steps, after which a nearby trajectory $y$ is evolved using the same delayed rule. The stated perturbations are uniformly distributed in $[-0.001,0.001]$. We denote the mean separation after $j$ further updates by
\[
d(j)=\left\langle |x_{N+j}-y_{N+j}|\right\rangle,
\]
where the average is taken over $10^3$ configurations. Figure~\ref{lyap} shows this separation for $h(i)=\lfloor i/4\rfloor$ and $\epsilon=0.1$ over 25 further updates. The initial growth is approximately exponential, followed by saturation.

For the logistic map with the sublinear delay $h(i)=\lfloor\sqrt{i}/4\rfloor$ and $\epsilon=0.05$, Fig.~\ref{lyap2} shows similar behavior over 20 further updates. These experiments provide evidence of finite-time exponential separation, consistent with chaotic dynamics, but do not by themselves establish a positive asymptotic Lyapunov exponent or a chaotic attractor. Such a conclusion requires a specified perturbation protocol for the delayed state and a long-time analysis of infinitesimal perturbation growth.

\begin{figure}
	\includegraphics[width=\linewidth]{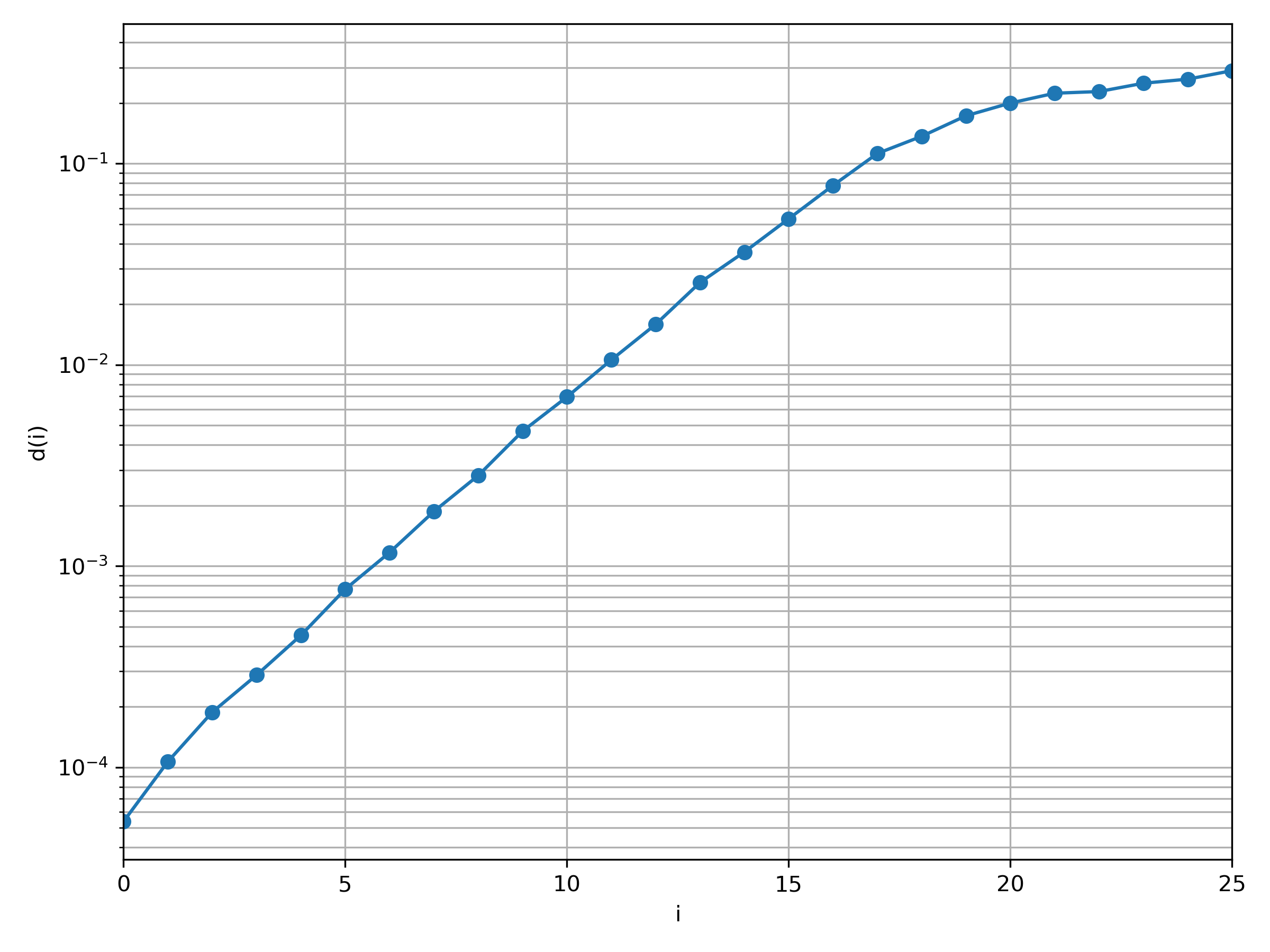}
	\caption{Mean separation $d(j)$ of reference and perturbed trajectories after an initial evolution of $10^5$ steps, for $h(i)=\lfloor i/4\rfloor$ and $\epsilon=0.1$. The plot covers 25 further updates and averages over $10^3$ configurations. On the logarithmic vertical scale, the initial approximately linear segment indicates finite-time exponential separation before saturation.}
	\label{lyap}
\end{figure}

\begin{figure}
	\includegraphics[width=\linewidth]{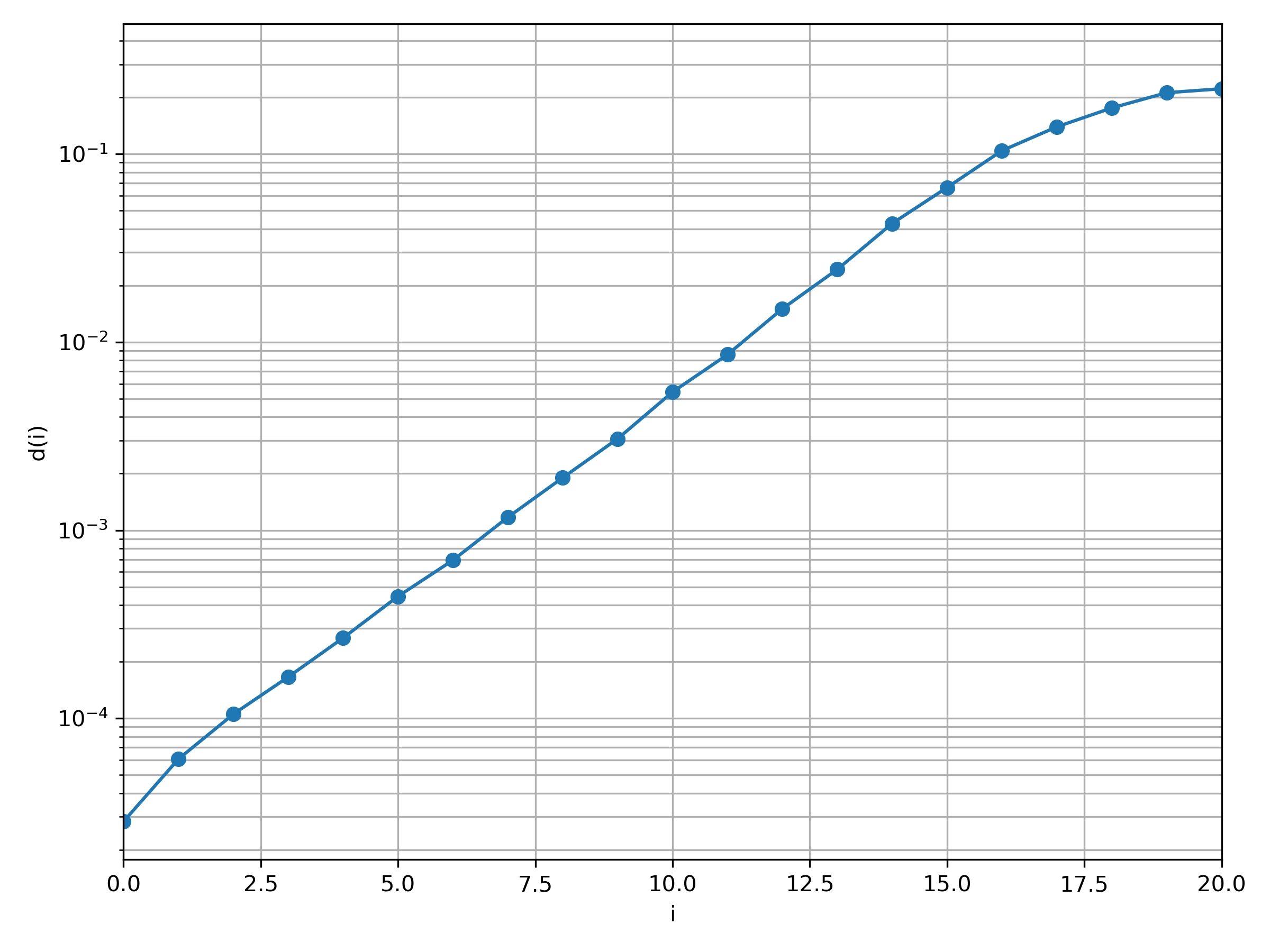}
	\caption{Mean separation $d(j)$ of reference and perturbed logistic-map trajectories after an initial evolution of $10^5$ steps, for $h(i)=\lfloor\sqrt{i}/4\rfloor$ and $\epsilon=0.05$. The plot covers 20 further updates and averages over $10^3$ configurations, showing approximately exponential separation initially and subsequent saturation.}
	\label{lyap2}
\end{figure}

\clearpage
\section{Conclusions, Discussion, and Future Directions} \label{sec4}

In this paper, we investigated pantograph difference equations in which the next iterate depends on both the current state and a state evaluated at a proportional or sublinear proportional index. The linear equation
\[
x_{n+1}=\lambda x_n+\delta x_{h(n)}
\]
provides the basic stability framework. When $h(n)\to\infty$, Theorem \ref{thm1} shows that $|\lambda|+|\delta|<1$ is a delay-independent sufficient condition for asymptotic stability of the zero solution.

Theorems \ref{thm2}--\ref{thm5} provide complementary instability and growth results. Theorem \ref{thm2} shows that, for $\lambda,\delta\geq0$ with $\lambda+\delta>1$, the zero solution is not asymptotically stable for any delay satisfying $h(n)\leq n$. Theorem \ref{thm5} strengthens this conclusion when $\lambda,\delta>0$ and $h$ is non-decreasing: every solution with $x_0>0$ is strictly increasing; for the proportional delay $h(n)=\lfloor an\rfloor$, it is consequently unbounded. Theorem \ref{thm3} proves that $|\lambda|>1+|\delta|$ forces every nonzero solution to be unbounded, independently of the particular delay. In the complementary dominance region, Theorem \ref{thm4} shows that, for $h(n)=\lfloor an\rfloor$, the condition $|\delta|>1+|\lambda|$ forces every nonzero solution to possess an unbounded subsequence. These results rigorously identify stable and unstable regions in the $(\lambda,\delta)$-plane, while leaving intermediate regions in which the outcome can depend on the proportional-delay parameter $a$.

The analytical results were supplemented by parameter scans for $a=0.1,0.2,\ldots,0.9$. Multi-horizon envelope and RMS diagnostics showed that many trajectories appearing bounded at a single terminal time actually exhibit ultra-slow growth at longer horizons. The computations also indicate that, within the delay-dependent region, the numerically stable portion generally shrinks as $a$ increases. The resulting regime diagrams should therefore be interpreted as a combination of rigorous analytical regions and finite-horizon numerical evidence, not as a complete proof of all transition boundaries.

For nonlinear delayed maps, linearization reduces local stability to the corresponding linear pantograph equation. This yields delay-independent sufficient conditions for fixed-point stability and applies directly to the delayed logistic map. In the homogeneous logistic and tent maps, proportional delay organizes the orbit into long plateaus or stasis intervals. Their lengths grow geometrically for $h(i)=\lfloor i/k\rfloor$ and even faster for the sublinear delays considered here. Consequently, asymptotic behavior may remain computationally inaccessible, producing pronounced slow convergence and finite-time multistability.

In the inhomogeneous case, numerical experiments show finite-time exponential separation of nearby trajectories, consistent with chaotic dynamics, although they do not establish an asymptotic Lyapunov exponent. Further work is needed to obtain sharp transition curves in the delay-dependent regions, rigorous asymptotic growth rates, and Lyapunov-type results for nonstationary delays. Extensions to higher-dimensional maps, coupled map lattices, and random or state-dependent proportional delays also provide natural directions for future research.

	
	
	%
	
	


\section*{Appendix}
\small
We provide some terms in the solution of the delay difference equation (\ref{pmap}) with $h(n)=\lfloor n/m \rfloor,$ $m=2,3,4,5$.

\bigskip
\noindent\textbf{The solution terms for $h(n)=\lfloor n/2 \rfloor$}:

\noindent\(x_1=\delta +\lambda\)

\noindent\(x_2=\delta +\delta  \lambda +\lambda ^2\)

\noindent\(x_3=\delta ^2+2 \delta  \lambda +\delta  \lambda ^2+\lambda ^3\)

\noindent\(x_4=\delta ^2+\left(\delta +\delta ^2\right) \lambda +2 \delta  \lambda ^2+\delta  \lambda ^3+\lambda ^4\)

\noindent\(x_5=\delta ^2+2 \delta ^2 \lambda +\left(2 \delta +\delta ^2\right) \lambda ^2+2 \delta  \lambda ^3+\delta  \lambda ^4+\lambda ^5\)

\noindent\(x_6=\delta ^2+2 \delta ^2 \lambda +\left(\delta +2 \delta ^2\right) \lambda ^2+\left(2 \delta +\delta ^2\right) \lambda ^3+2 \delta  \lambda
^4+\delta  \lambda ^5+\lambda ^6\)

\noindent\(x_7=\delta ^3+3 \delta ^2 \lambda +3 \delta ^2 \lambda ^2+\left(2 \delta +2 \delta ^2\right) \lambda ^3+\left(2 \delta +\delta ^2\right)
\lambda ^4+2 \delta  \lambda ^5+\delta  \lambda ^6+\lambda ^7\)

\noindent\(x_8=\delta ^3+\left(2 \delta ^2+\delta ^3\right) \lambda +4 \delta ^2 \lambda ^2+\left(\delta +3 \delta ^2\right) \lambda ^3+\left(2 \delta
+2 \delta ^2\right) \lambda ^4+\left(2 \delta +\delta ^2\right) \lambda ^5+2 \delta  \lambda ^6+\delta  \lambda ^7+\lambda ^8\)

\noindent\(x_9=\delta ^3+\left(\delta ^2+2 \delta ^3\right) \lambda +\left(4 \delta ^2+\delta ^3\right) \lambda ^2+5 \delta ^2 \lambda ^3+\left(2
\delta +3 \delta ^2\right) \lambda ^4+\left(2 \delta +2 \delta ^2\right) \lambda ^5+\left(2 \delta +\delta ^2\right) \lambda ^6+2 \delta  \lambda
^7+\delta  \lambda ^8+\lambda ^9\)

\noindent\(x_{10}=\delta ^3+\left(\delta ^2+2 \delta ^3\right) \lambda +\left(3 \delta ^2+2 \delta ^3\right) \lambda ^2+\left(5 \delta ^2+\delta
^3\right) \lambda ^3+\left(\delta +5 \delta ^2\right) \lambda ^4+\left(2 \delta +3 \delta ^2\right) \lambda ^5+\left(2 \delta +2 \delta ^2\right)
\lambda ^6+\left(2 \delta +\delta ^2\right) \lambda ^7+2 \delta  \lambda ^8+\delta  \lambda ^9+\lambda ^{10}\)

\noindent\(x_{11}=\delta ^3+3 \delta ^3 \lambda +\left(3 \delta ^2+3 \delta ^3\right) \lambda ^2+\left(5 \delta ^2+2 \delta ^3\right) \lambda ^3+\left(6
\delta ^2+\delta ^3\right) \lambda ^4+\left(2 \delta +5 \delta ^2\right) \lambda ^5+\left(2 \delta +3 \delta ^2\right) \lambda ^6+\left(2 \delta
+2 \delta ^2\right) \lambda ^7+\left(2 \delta +\delta ^2\right) \lambda ^8+2 \delta  \lambda ^9+\delta  \lambda ^{10}+\lambda ^{11}\)

\noindent\(x_{12}=\delta ^3+3 \delta ^3 \lambda +\left(2 \delta ^2+4 \delta ^3\right) \lambda ^2+\left(5 \delta ^2+3 \delta ^3\right) \lambda ^3+\left(6
\delta ^2+2 \delta ^3\right) \lambda ^4+\left(\delta +6 \delta ^2+\delta ^3\right) \lambda ^5+\left(2 \delta +5 \delta ^2\right) \lambda ^6+\left(2
\delta +3 \delta ^2\right) \lambda ^7+\left(2 \delta +2 \delta ^2\right) \lambda ^8+\left(2 \delta +\delta ^2\right) \lambda ^9+2 \delta  \lambda
^{10}+\delta  \lambda ^{11}+\lambda ^{12}\)

\noindent\(x_{13}=\delta ^3+3 \delta ^3 \lambda +\left(\delta ^2+5 \delta ^3\right) \lambda ^2+\left(4 \delta ^2+5 \delta ^3\right) \lambda ^3+\left(7
\delta ^2+3 \delta ^3\right) \lambda ^4+\left(7 \delta ^2+2 \delta ^3\right) \lambda ^5+\left(2 \delta +6 \delta ^2+\delta ^3\right) \lambda ^6+\left(2
\delta +5 \delta ^2\right) \lambda ^7+\left(2 \delta +3 \delta ^2\right) \lambda ^8+\left(2 \delta +2 \delta ^2\right) \lambda ^9+\left(2 \delta
+\delta ^2\right) \lambda ^{10}+2 \delta  \lambda ^{11}+\delta  \lambda ^{12}+\lambda ^{13}\)

\bigskip
\noindent\textbf{The solution terms for $h(n)=\lfloor n/3 \rfloor$}:

\noindent\(x_1=\delta +\lambda\)

\noindent\(x_2=\delta +\delta  \lambda +\lambda ^2\)

\noindent\(x_3=\delta +\delta  \lambda +\delta  \lambda ^2+\lambda ^3\)

\noindent\(x_4=\delta ^2+2 \delta  \lambda +\delta  \lambda ^2+\delta  \lambda ^3+\lambda ^4\)

\noindent\(x_5=\delta ^2+\left(\delta +\delta ^2\right) \lambda +2 \delta  \lambda ^2+\delta  \lambda ^3+\delta  \lambda ^4+\lambda ^5\)

\noindent\(x_6=\delta ^2+\left(\delta +\delta ^2\right) \lambda +\left(\delta +\delta ^2\right) \lambda ^2+2 \delta  \lambda ^3+\delta  \lambda ^4+\delta
 \lambda ^5+\lambda ^6\)

\noindent\(x_7=\delta ^2+2 \delta ^2 \lambda +\left(2 \delta +\delta ^2\right) \lambda ^2+\left(\delta +\delta ^2\right) \lambda ^3+2 \delta  \lambda
^4+\delta  \lambda ^5+\delta  \lambda ^6+\lambda ^7\)

\noindent\(x_8=\delta ^2+2 \delta ^2 \lambda +\left(\delta +2 \delta ^2\right) \lambda ^2+\left(2 \delta +\delta ^2\right) \lambda ^3+\left(\delta
+\delta ^2\right) \lambda ^4+2 \delta  \lambda ^5+\delta  \lambda ^6+\delta  \lambda ^7+\lambda ^8\)

\noindent\(x_9=\delta ^2+2 \delta ^2 \lambda +\left(\delta +2 \delta ^2\right) \lambda ^2+\left(\delta +2 \delta ^2\right) \lambda ^3+\left(2 \delta
+\delta ^2\right) \lambda ^4+\left(\delta +\delta ^2\right) \lambda ^5+2 \delta  \lambda ^6+\delta  \lambda ^7+\delta  \lambda ^8+\lambda ^9\)

\noindent\(x_{10}=\delta ^2+2 \delta ^2 \lambda +3 \delta ^2 \lambda ^2+\left(2 \delta +2 \delta ^2\right) \lambda ^3+\left(\delta +2 \delta ^2\right)
\lambda ^4+\left(2 \delta +\delta ^2\right) \lambda ^5+\left(\delta +\delta ^2\right) \lambda ^6+2 \delta  \lambda ^7+\delta  \lambda ^8+\delta 
\lambda ^9+\lambda ^{10}\)

\noindent\(x_{11}=\delta ^2+2 \delta ^2 \lambda +3 \delta ^2 \lambda ^2+\left(\delta +3 \delta ^2\right) \lambda ^3+\left(2 \delta +2 \delta ^2\right)
\lambda ^4+\left(\delta +2 \delta ^2\right) \lambda ^5+\left(2 \delta +\delta ^2\right) \lambda ^6+\left(\delta +\delta ^2\right) \lambda ^7+2 \delta
 \lambda ^8+\delta  \lambda ^9+\delta  \lambda ^{10}+\lambda ^{11}\)

\noindent\(x_{12}=\delta ^2+2 \delta ^2 \lambda +3 \delta ^2 \lambda ^2+\left(\delta +3 \delta ^2\right) \lambda ^3+\left(\delta +3 \delta ^2\right)
\lambda ^4+\left(2 \delta +2 \delta ^2\right) \lambda ^5+\left(\delta +2 \delta ^2\right) \lambda ^6+\left(2 \delta +\delta ^2\right) \lambda ^7+\left(\delta
+\delta ^2\right) \lambda ^8+2 \delta  \lambda ^9+\delta  \lambda ^{10}+\delta  \lambda ^{11}+\lambda ^{12}\)

\noindent\(x_{13}=\delta ^3+3 \delta ^2 \lambda +3 \delta ^2 \lambda ^2+4 \delta ^2 \lambda ^3+\left(2 \delta +3 \delta ^2\right) \lambda ^4+\left(\delta
+3 \delta ^2\right) \lambda ^5+\left(2 \delta +2 \delta ^2\right) \lambda ^6+\left(\delta +2 \delta ^2\right) \lambda ^7+\left(2 \delta +\delta ^2\right)
\lambda ^8+\left(\delta +\delta ^2\right) \lambda ^9+2 \delta  \lambda ^{10}+\delta  \lambda ^{11}+\delta  \lambda ^{12}+\lambda ^{13}\)

\bigskip
\noindent\textbf{The solution terms for $h(n)=\lfloor n/4 \rfloor$}:

\noindent\(x_1=\delta +\lambda\)

\noindent\(x_2=\delta +\delta  \lambda +\lambda ^2\)

\noindent\(x_3=\delta +\delta  \lambda +\delta  \lambda ^2+\lambda ^3\)

\noindent\(x_4=\delta +\delta  \lambda +\delta  \lambda ^2+\delta  \lambda ^3+\lambda ^4\)

\noindent\(x_5=\delta ^2+2 \delta  \lambda +\delta  \lambda ^2+\delta  \lambda ^3+\delta  \lambda ^4+\lambda ^5\)

\noindent\(x_6=\delta ^2+\left(\delta +\delta ^2\right) \lambda +2 \delta  \lambda ^2+\delta  \lambda ^3+\delta  \lambda ^4+\delta  \lambda ^5+\lambda
^6\)

\noindent\(x_7=\delta ^2+\left(\delta +\delta ^2\right) \lambda +\left(\delta +\delta ^2\right) \lambda ^2+2 \delta  \lambda ^3+\delta  \lambda ^4+\delta
 \lambda ^5+\delta  \lambda ^6+\lambda ^7\)

\noindent\(x_8=\delta ^2+\left(\delta +\delta ^2\right) \lambda +\left(\delta +\delta ^2\right) \lambda ^2+\left(\delta +\delta ^2\right) \lambda
^3+2 \delta  \lambda ^4+\delta  \lambda ^5+\delta  \lambda ^6+\delta  \lambda ^7+\lambda ^8\)

\noindent\(x_9=\delta ^2+2 \delta ^2 \lambda +\left(2 \delta +\delta ^2\right) \lambda ^2+\left(\delta +\delta ^2\right) \lambda ^3+\left(\delta
+\delta ^2\right) \lambda ^4+2 \delta  \lambda ^5+\delta  \lambda ^6+\delta  \lambda ^7+\delta  \lambda ^8+\lambda ^9\)

\noindent\(x_{10}=\delta ^2+2 \delta ^2 \lambda +\left(\delta +2 \delta ^2\right) \lambda ^2+\left(2 \delta +\delta ^2\right) \lambda ^3+\left(\delta
+\delta ^2\right) \lambda ^4+\left(\delta +\delta ^2\right) \lambda ^5+2 \delta  \lambda ^6+\delta  \lambda ^7+\delta  \lambda ^8+\delta  \lambda
^9+\lambda ^{10}\)

\noindent\(x_{11}=\delta ^2+2 \delta ^2 \lambda +\left(\delta +2 \delta ^2\right) \lambda ^2+\left(\delta +2 \delta ^2\right) \lambda ^3+\left(2
\delta +\delta ^2\right) \lambda ^4+\left(\delta +\delta ^2\right) \lambda ^5+\left(\delta +\delta ^2\right) \lambda ^6+2 \delta  \lambda ^7+\delta
 \lambda ^8+\delta  \lambda ^9+\delta  \lambda ^{10}+\lambda ^{11}\)

\noindent\(x_{12}=\delta ^2+2 \delta ^2 \lambda +\left(\delta +2 \delta ^2\right) \lambda ^2+\left(\delta +2 \delta ^2\right) \lambda ^3+\left(\delta
+2 \delta ^2\right) \lambda ^4+\left(2 \delta +\delta ^2\right) \lambda ^5+\left(\delta +\delta ^2\right) \lambda ^6+\left(\delta +\delta ^2\right)
\lambda ^7+2 \delta  \lambda ^8+\delta  \lambda ^9+\delta  \lambda ^{10}+\delta  \lambda ^{11}+\lambda ^{12}\)

\noindent\(x_{13}=\delta ^2+2 \delta ^2 \lambda +3 \delta ^2 \lambda ^2+\left(2 \delta +2 \delta ^2\right) \lambda ^3+\left(\delta +2 \delta ^2\right)
\lambda ^4+\left(\delta +2 \delta ^2\right) \lambda ^5+\left(2 \delta +\delta ^2\right) \lambda ^6+\left(\delta +\delta ^2\right) \lambda ^7+\left(\delta
+\delta ^2\right) \lambda ^8+2 \delta  \lambda ^9+\delta  \lambda ^{10}+\delta  \lambda ^{11}+\delta  \lambda ^{12}+\lambda ^{13}\)

\bigskip
\noindent\textbf{The solution terms for $h(n)=\lfloor n/5 \rfloor$}:

\noindent\(x_1=\delta +\lambda\)

\noindent\(x_2=\delta +\delta  \lambda +\lambda ^2\)

\noindent\(x_3=\delta +\delta  \lambda +\delta  \lambda ^2+\lambda ^3\)

\noindent\(x_4=\delta +\delta  \lambda +\delta  \lambda ^2+\delta  \lambda ^3+\lambda ^4\)

\noindent\(x_5=\delta +\delta  \lambda +\delta  \lambda ^2+\delta  \lambda ^3+\delta  \lambda ^4+\lambda ^5\)

\noindent\(x_6=\delta ^2+2 \delta  \lambda +\delta  \lambda ^2+\delta  \lambda ^3+\delta  \lambda ^4+\delta  \lambda ^5+\lambda ^6\)

\noindent\(x_7=\delta ^2+\left(\delta +\delta ^2\right) \lambda +2 \delta  \lambda ^2+\delta  \lambda ^3+\delta  \lambda ^4+\delta  \lambda ^5+\delta
 \lambda ^6+\lambda ^7\)

\noindent\(x_8=\delta ^2+\left(\delta +\delta ^2\right) \lambda +\left(\delta +\delta ^2\right) \lambda ^2+2 \delta  \lambda ^3+\delta  \lambda ^4+\delta
 \lambda ^5+\delta  \lambda ^6+\delta  \lambda ^7+\lambda ^8\)

\noindent\(x_9=\delta ^2+\left(\delta +\delta ^2\right) \lambda +\left(\delta +\delta ^2\right) \lambda ^2+\left(\delta +\delta ^2\right) \lambda
^3+2 \delta  \lambda ^4+\delta  \lambda ^5+\delta  \lambda ^6+\delta  \lambda ^7+\delta  \lambda ^8+\lambda ^9\)

\noindent\(x_{10}=\delta ^2+\left(\delta +\delta ^2\right) \lambda +\left(\delta +\delta ^2\right) \lambda ^2+\left(\delta +\delta ^2\right) \lambda
^3+\left(\delta +\delta ^2\right) \lambda ^4+2 \delta  \lambda ^5+\delta  \lambda ^6+\delta  \lambda ^7+\delta  \lambda ^8+\delta  \lambda ^9+\lambda
^{10}\)

\noindent\(x_{11}=\delta ^2+2 \delta ^2 \lambda +\left(2 \delta +\delta ^2\right) \lambda ^2+\left(\delta +\delta ^2\right) \lambda ^3+\left(\delta
+\delta ^2\right) \lambda ^4+\left(\delta +\delta ^2\right) \lambda ^5+2 \delta  \lambda ^6+\delta  \lambda ^7+\delta  \lambda ^8+\delta  \lambda
^9+\delta  \lambda ^{10}+\lambda ^{11}\)

\noindent\(x_{12}=\delta ^2+2 \delta ^2 \lambda +\left(\delta +2 \delta ^2\right) \lambda ^2+\left(2 \delta +\delta ^2\right) \lambda ^3+\left(\delta
+\delta ^2\right) \lambda ^4+\left(\delta +\delta ^2\right) \lambda ^5+\left(\delta +\delta ^2\right) \lambda ^6+2 \delta  \lambda ^7+\delta  \lambda
^8+\delta  \lambda ^9+\delta  \lambda ^{10}+\delta  \lambda ^{11}+\lambda ^{12}\)

\noindent\(x_{13}=\delta ^2+2 \delta ^2 \lambda +\left(\delta +2 \delta ^2\right) \lambda ^2+\left(\delta +2 \delta ^2\right) \lambda ^3+\left(2
\delta +\delta ^2\right) \lambda ^4+\left(\delta +\delta ^2\right) \lambda ^5+\left(\delta +\delta ^2\right) \lambda ^6+\left(\delta +\delta ^2\right)
\lambda ^7+2 \delta  \lambda ^8+\delta  \lambda ^9+\delta  \lambda ^{10}+\delta  \lambda ^{11}+\delta  \lambda ^{12}+\lambda ^{13}\)

\normalsize
\section*{Declarations}
The authors declare that they have no known competing financial interests or personal relationships that could have appeared to influence the work reported in this paper.\\
Generative AI tools were used only to improve the language and 
clarity of this manuscript. The authors carefully reviewed and 
edited the output and take full responsibility for the final content.

\bibliographystyle{plain}      
\bibliography{ref_revised.bib}

@book{elaydi2005introduction,
  title={An introduction to difference equations},
  author={Elaydi, Saber},
  year={2005},
  publisher={Springer}
}

@article{ockendon1971dynamics,
  title={The dynamics of a current collection system for an electric locomotive},
  author={Ockendon, J. R. and Tayler, A. B.},
  journal={Proceedings of the Royal Society of London. A. Mathematical and Physical Sciences},
  volume={322},
  number={1551},
  pages={447--468},
  year={1971},
  publisher={The Royal Society},
  doi={10.1098/rspa.1971.0078}
}

@article{kato1971functional,
  title={The functional-differential equation $y'(x)=ay(\lambda x)+by(x)$},
  author={Kato, Tosio and McLeod, J. B.},
  journal={Bulletin of the American Mathematical Society},
  volume={77},
  number={6},
  pages={891--937},
  year={1971}
}

@article{iserles1993generalized,
  title={On the generalized pantograph functional-differential equation},
  author={Iserles, A.},
  journal={European Journal of Applied Mathematics},
  volume={4},
  number={1},
  pages={1--38},
  year={1993},
  doi={10.1017/S0956792500000966}
}

@article{buhmann1993stability,
  title={Stability of the discretized pantograph differential equation},
  author={Buhmann, Martin D. and Iserles, Arieh},
  journal={Mathematics of Computation},
  volume={60},
  number={202},
  pages={575--589},
  year={1993},
  doi={10.1090/S0025-5718-1993-1176707-2}
}

@article{peics2000asymptotic,
  title={On the asymptotic behaviour of a pantograph-type difference equation},
  author={P{\'e}ics, Hajnalka},
  journal={Journal of Difference Equations and Applications},
  volume={6},
  number={3},
  pages={257--273},
  year={2000},
  doi={10.1080/10236190008808228}
}

@article{kundrat2006asymptotics,
  title={On the asymptotics of the difference equation with a proportional delay},
  author={Kundr{\'a}t, Petr},
  journal={Opuscula Mathematica},
  volume={26},
  number={3},
  pages={499--506},
  year={2006}
}

@book{bellen2003numerical,
  title={Numerical Methods for Delay Differential Equations},
  author={Bellen, Alfredo and Zennaro, Marino},
  series={Numerical Mathematics and Scientific Computation},
  publisher={Oxford University Press},
  address={New York},
  year={2003},
  doi={10.1093/acprof:oso/9780198506546.001.0001}
}

@article{jafari2021new,
  title={A new numerical method to solve pantograph delay differential equations with convergence analysis},
  author={Jafari, H. and Mahmoudi, M. and Noori Skandari, M. H.},
  journal={Advances in Difference Equations},
  volume={2021},
  number={129},
  year={2021},
  doi={10.1186/s13662-021-03293-0}
}

@article{rihan2022theta,
  title={An analysis of the theta-method for pantograph-type delay differential equations},
  author={Rihan, Fathalla A. and Rihan, Ahmed F.},
  journal={Complexity},
  volume={2022},
  pages={8961352},
  year={2022},
  doi={10.1155/2022/8961352}
}

@article{albidah2023exact,
  title={Exact and numerical analysis of the pantograph delay differential equation via the homotopy perturbation method},
  author={Albidah, Abdulrahman B. and Kanaan, Nourah E. and Ebaid, Abdelhalim and Al-Jeaid, Hind K.},
  journal={Mathematics},
  volume={11},
  number={4},
  pages={944},
  year={2023},
  doi={10.3390/math11040944}
}

@article{rihan2024runge,
  title={Continuous Runge--Kutta schemes for pantograph type delay differential equations},
  author={Rihan, Fathalla A.},
  journal={Partial Differential Equations in Applied Mathematics},
  volume={11},
  pages={100797},
  year={2024},
  doi={10.1016/j.padiff.2024.100797}
}

@article{aljohani2024analytical,
  title={Analytical and numerical investigation for the inhomogeneous pantograph equation},
  author={Aljohani, A. F. and Ebaid, Abdelhalim and others},
  journal={Axioms},
  volume={13},
  number={6},
  pages={377},
  year={2024},
  doi={10.3390/axioms13060377}
}

@article{abusalim2024some,
  title={Some results for a class of pantograph integro-fractional stochastic differential equations},
  author={Abusalim, Sahar Mohammad and Fakhfakh, Raouf and Alshahrani, Fatimah and Ben Makhlouf, Abdellatif},
  journal={Symmetry},
  volume={16},
  number={10},
  pages={1362},
  year={2024},
  doi={10.3390/sym16101362}
}

@article{bhalekar2016analytical,
  title={Analytical solutions of nonlinear equations with proportional delays},
  author={Bhalekar, Sachin and Patade, Jayvant},
  journal={Applied and Computational Mathematics},
  volume={15},
  number={3},
  pages={331--345},
  year={2016}
}

@article{bhalekar2017series,
  title={Series solution of the pantograph equation and its properties},
  author={Bhalekar, Sachin and Patade, Jayvant},
  journal={Fractal and Fractional},
  volume={1},
  number={1},
  pages={16},
  year={2017},
  doi={10.3390/fractalfract1010016}
}

@article{patade2017analytical,
  title={Analytical solution of pantograph equation with incommensurate delay},
  author={Patade, Jayvant and Bhalekar, Sachin},
  journal={Physical Sciences Reviews},
  volume={2},
  number={9},
  pages={20165103},
  year={2017},
  doi={10.1515/psr-2016-5103}
}

@article{bhalekar2026stability,
  title={Stability Analysis of Pantograph Delay Differential Equations},
  author={Bhalekar, Sachin},
  journal={arXiv preprint arXiv:2605.22019},
  year={2026}
}

@article{bahsi2015numerical,
  title={Numerical solution of pantograph-type delay differential equations using perturbation-iteration algorithms},
  author={Bah{\c{s}}i, M. Mustafa and {\c{C}}evik, Mehmet},
  journal={Journal of Applied Mathematics},
  volume={2015},
  number={1},
  pages={139821},
  year={2015}
}

@article{ahmad2023controllability,
  title={Controllability of a generalized multi-pantograph system of non-integer order with state delay},
  author={Ahmad, Irshad and others},
  journal={AIMS Mathematics},
  volume={8},
  number={6},
  pages={13764--13784},
  year={2023}
}

@article{alrebdi2023accurate,
  title={Accurate solution for the pantograph delay differential equation via Laplace transform},
  author={Alrebdi, Reem and Al-Jeaid, Hind K.},
  journal={Mathematics},
  volume={11},
  number={9},
  pages={2031},
  year={2023}
}

@article{yi2025novel,
  title={A novel successive convexification framework with symplectic pseudospectral solutions for nonlinear optimal control problems of constrained pantograph delay systems},
  author={Yi, Xueling and others},
  journal={International Journal of Systems Science},
  volume={56},
  number={2},
  pages={237--266},
  year={2025}
}

@article{jamaludin2025adaptive,
  title={Adaptive predictor-corrector schemes for a class of pantograph-type delay differential equations},
  author={Jamaludin, Nur Aishah and Abdul Aziz, Nurul Huda},
  journal={Journal of Quality Measurement and Analysis},
  volume={21},
  number={4},
  pages={313--327},
  year={2025}
}

@article{bruni2025recent,
  title={Recent developments on pantograph-overhead line interaction},
  author={Bruni, Stefano and others},
  journal={Vehicle System Dynamics},
  volume={63},
  number={7},
  pages={1358--1394},
  year={2025}
}

@article{mohammad2026carleson,
  title={Carleson frame methods for pantograph-type delay differential equations: Theory and applications},
  author={Mohammad, Mutaz and Sweidan, Mohyeedden and Trounev, Alexander},
  journal={Mathematical Methods in the Applied Sciences},
  year={2026}
}

@book{bendat2010random,
  title={Random Data: Analysis and Measurement Procedures},
  author={Bendat, Julius S. and Piersol, Allan G.},
  edition={4},
  publisher={John Wiley \& Sons},
  address={Hoboken, NJ},
  year={2010},
  doi={10.1002/9781118032428}
}

@book{press2007numerical,
  title={Numerical Recipes: The Art of Scientific Computing},
  author={Press, William H. and Teukolsky, Saul A. and Vetterling, William T. and Flannery, Brian P.},
  edition={3},
  publisher={Cambridge University Press},
  address={Cambridge},
  year={2007}
}

@article{clauset2009powerlaw,
  title={Power-Law Distributions in Empirical Data},
  author={Clauset, Aaron and Shalizi, Cosma Rohilla and Newman, M. E. J.},
  journal={SIAM Review},
  volume={51},
  number={4},
  pages={661--703},
  year={2009},
  doi={10.1137/070710111}
}

\end{document}